\documentclass{amsart}
\usepackage{graphicx, color}
\usepackage{amscd}
\usepackage{amsmath,empheq}
\usepackage{amsfonts}
\usepackage{amssymb}
\usepackage{mathrsfs}

\usepackage[all]{xy}

\newtheorem{theorem}{Theorem}

\newtheorem{prop}[theorem]{Proposition}
\newtheorem{remark}{Remark}

\newenvironment{proof-sketch}{\noindent{\bf Sketch of Proof}\hspace*{1em}}{\qed\bigskip}


\everymath{\displaystyle}

\newcommand{\RR}{\mathbb R}
\newcommand{\NN}{\mathbb N}

\renewcommand{\leq}{\leqslant}

\renewcommand{\geq}{\geqslant}


\begin{document}

\title[Periodic solutions for a class of evolution inclusions]{Periodic solutions for a class of evolution inclusions}

\author[N.S. Papageorgiou]{Nikolaos S. Papageorgiou}
\address[N.S. Papageorgiou]{National Technical University, Department of Mathematics,
				Zografou Campus, 15780 Athens, Greece}
\email{\tt npapg@math.ntua.gr}

\author[V.D. R\u{a}dulescu]{Vicen\c{t}iu D. R\u{a}dulescu}
\address[V.D. R\u{a}dulescu]{Faculty of Applied Mathematics, AGH University of Science and Technology, al. Mickiewicza 30, 30-059 Krak\'ow, Poland \& Institute of Mathematics ``Simion Stoilow" of the Romanian Academy, P.O. Box 1-764, 014700 Bucharest, Romania}
\email{\tt vicentiu.radulescu@imar.ro}

\author[D.D. Repov\v{s}]{Du\v{s}an D. Repov\v{s}}
\address[D.D. Repov\v{s}]{Faculty of Education and Faculty of Mathematics and Physics, University of Ljubljana, 1000 Ljubljana, Slovenia}
\email{\tt dusan.repovs@guest.arnes.si}

\keywords{Evolution triple, $L$-pseudomonotone map, extremal trajectories, strong relaxation, parabolic control system, Poincar\'e map.\\
\phantom{aa} 2010 AMS Subject Classification: 35K85 (Primary), 35L90 (Secondary)}

\begin{abstract}
We consider a periodic evolution inclusion defined on an evolution triple of spaces. The inclusion involves also a subdifferential term. We prove existence theorems for both the convex and the nonconvex problem, and we also produce extremal trajectories. Moreover, we show that every solution of the convex problem can be approximated uniformly by certain extremal trajectories (strong relaxation). We illustrate our results by examining a nonlinear parabolic control system.
\end{abstract}

\maketitle

\section{Introduction}

Let $T=[0,b]$ and let $(X,H,X^*)$ be an evolution of spaces (see Section 2). We assume that $X$ is embedded compactly into $H$. In this paper, we study the following periodic evolution inclusion
\begin{equation}\label{eq1}
	\left\{
		\begin{array}{l}
			-u'(t)\in A(t,u(t)) + \partial\varphi(u(t)) + F(t,u(t))\ \mbox{for almost all}\ t\in T,\\
			u(0) = u(b).
		\end{array}
	\right\}
\end{equation}

In this problem, $A:T\times X\rightarrow X^*$ is a map which is a measurable in $t\in T$ and monotone in $x\in X$. Also, $\varphi\in \Gamma_0(H)$ (see Section 2) and $\partial\varphi(\cdot)$ denotes the subdifferential of $\varphi$ in the sense of convex analysis. Finally, $F:T\times H\rightarrow2^{H}\backslash\{\emptyset\}$ is a multivalued perturbation.

Periodic problems for evolution inclusions have been studied either with $\varphi\equiv0$ (see Hu \& Papageorgiou \cite[Section 1.5]{5}, Xue \& Zheng \cite{11}) or with $A\equiv0$ (see Papageorgiou \& R\u adulescu \cite{9} and Papageorgiou, R\u adulescu \& Repov\v{s} \cite{9bis}). In (\ref{eq1}) both terms are present and this distinguishes the present work from  the aforementioned papers. Their methods and techniques are not applicable here. We prove existence theorems for the ``convex" problem (that is, $F$ has convex values) and for the ``nonconvex" problem (that is, $F$ has nonconvex values). We also prove the existence of extremal trajectories, that is, we produce solutions which move through the extreme points of the multivalued perturbation $F(t,x)$. Moreover, we show that every solution of the convex problem can be approximated in the $C(T,H)$-norm by certain extremal trajectories (strong relaxation). In the final part of this paper we illustrate our results by examining a parabolic distributed parameter system.

\section{Mathematical background}

The tools that we use in the study of problem (\ref{eq1}) come from multivalued analysis and from the theory of operations of monotone type. A detailed presentation of these theories can be found in the books of Hu \& Papageorgiou \cite{4} and Zeidler \cite{12}.

Let $(\Omega,\Sigma)$ be a measurable space and $V$ a separable Banach space. Throughout this work we will use the following notations:
$$\begin{array}{ll}
	P_{f_{(c)}}(V) = \left\{C\subseteq V:\ \mbox{$C$ is nonempty, closed (and convex)}\right\},\\
	P_{(w)k(c)}(V) = \left\{C\subseteq V:\ \mbox{$C$ is nonempty, ($w$)-compact (and convex)}\right\}.
\end{array}.$$

A multifunction (set-valued function) $F:\Omega\rightarrow2^{V}\backslash\{\emptyset\}$ is a said to be ``graph measurable", if
$$
{\rm Gr}\,F = \left\{(\omega, v)\in\Omega\times V: v\in F(\omega)\right\}\in\Sigma\otimes B(V),
$$
where $B(V)$ is the Borel $\sigma$-field of $V$. A multifunction $G:\Omega\rightarrow P_f(V)$ is ``measurable", if for all $v\in V$, the function
\begin{equation*}
	\omega\mapsto d(v,F(\omega))\equiv \inf\left\{||v-y||_V:y\in F(\omega)\right\}
\end{equation*}
is $\Sigma$-measurable. For multifunctions with values in $P_f(V)$, measurability implies graph measurability, while the converse is true if there is a $\sigma$-finite measure $\mu$ on $\Sigma$ and $\Sigma$ is $\mu$-complete.

Suppose that $(\Omega,\Sigma,\mu)$ is a $\sigma$-finite measure space and $F:\Omega\rightarrow2^V\backslash\{\emptyset\}$. For $1\leq p\leq\infty$, we define
\begin{equation*}
	S^p_F = \left\{h\in L^p(\Omega,V):h(\omega)\in F(\omega)\ \mu-\mbox{almost everywhere}\right\}.
\end{equation*}

A straightforward application of the Yankov-von Neumann-Aumann selection theorem (see Theorem 2.14 in Hu \& Papageorgiou \cite[p. 158]{4}), implies that
\begin{equation*}
	``S^p_F\neq\emptyset\ \mbox{if and only if}\ \inf\{||y||_V:y\in F(\omega)\}\in L^p(\Omega)."
\end{equation*}

The set $S^p_F$ is ``decomposable" in the sense that, if $(C,h_1,h_2)\in\Sigma\times S^p_F\times S^p_F$ then $\chi_C h_1 + \chi_{\Omega\backslash C}h_2\in S^p_F$. Since $\chi_{\Omega\backslash C}=1-\chi_C$, decomposability formally looks like the notion of convexity, only now the coefficients in the linear combination are functions. In fact, decomposable sets exhibit some properties which are similar to those of convex sets (see Hu \& Papageorgiou \cite[Section 2.3]{4}).

Suppose now that $Z$ and $Y$ are Hausdorff topological spaces and $F:Z\rightarrow2^Y\backslash\{\emptyset\}$. We say that $F(\cdot)$ is ``upper semicontinuous (usc)" (resp. ``lower semicontinuous (lsc)"), if for all open $U\subseteq Y$ the set $F^+(U)=\{z\in Z: F(z)\subseteq U\}$ (resp. $F^-(U)=\{z\in Z: F(z)\cap U\neq \emptyset\}$) is open. If $F(\cdot)$ has closed values and is usc, then ${\rm Gr}\,F\subseteq Z\times Y$ is closed. The converse is true if $F(\cdot)$ is locally compact (that is, for every $z\in Z$, we can find a neighbourhood $\mathcal{U}$ of $z$ such that $\overline{F(\mathcal{U})}\subseteq Y$ is compact). Also, if $Y$ is a metric space, then $F:Z\rightarrow 2^Y\backslash\{\emptyset\}$ is lsc if and only if for all $y\in Y$, the mapping $z\mapsto d(y,F(z)) = \inf\{d(y,v):v\in F(z)\}$ is an upper semicontinuous $\RR_+$-valued function.

Suppose that $Y$ is a metric space. On $P_f(Y)$ we can define a generalized metric, known as the ``Hausdorff metric", by
\begin{equation*}
	h(C,E)=\sup\left\{|d(u,C) - d(u,E)|: u\in Y\right\} = \max\{\sup_{c\in C}d(C,E), \sup_{e\in E}d(e,C)\}\ \mbox{for all}\ C,E\subseteq Y.
\end{equation*}

If $Y$ is a complete metric space, then so is $(P_f(Y),h)$. A multifunction $F:Z\rightarrow P_f(Y)$ is said to be ``h-continuous", if it is continuous from $Z$ into $(P_f(Y),h)$.

Suppose that $V,Y$ are Banach spaces and assume that $V$ is embedded continuously and densely into $Y$ (denoted by $V\hookrightarrow Y$). Then
\begin{itemize}
	\item [(a)] $Y^*$ is embedded continuously in $V^*$;
	\item [(b)] if $V$ is reflexive, then $Y^*\hookrightarrow V^*$.
\end{itemize}

A triple of spaces $(X,H,X^*)$ is said to be an ``evolution triple", if the following properties hold:
\begin{itemize}
	\item [(a)] $X$ is a separable, reflexive Banach space;
	\item [(b)] $H$ is a separable Hilbert space which we identify with its dual (that is, $H^*=H$);
	\item [(c)] $X\hookrightarrow H$ (hence $H\hookrightarrow X^*$).
\end{itemize}

By $||\cdot||$ (resp. $|\cdot|$, $||\cdot||_*$) we denote the norm of $X$ (resp. of $H,X^*$). Property (c) above implies that
\begin{equation*}
	|\cdot|\leq\hat{c_1}||\cdot||\ \mbox{and}\ ||\cdot||_*\leq\hat{c_2}|\cdot|\ \mbox{for some}\ \hat{c_1},\hat{c_2}>0.
\end{equation*}

We denote by $\langle\cdot,\cdot\rangle$ the duality brackets for the pair $(X^*,X)$ and by $(\cdot,\cdot)$ the inner product of $H$. We have
\begin{equation*}
	\langle\cdot,\cdot\rangle|_{H\times X} = (\cdot,\cdot).
\end{equation*}

Let $T=[0,b]$ and $1<p<\infty$. By $p'\in(1,\infty)$ we denote the conjugate exponent of $p$, that is, $\frac{1}{p}+\frac{1}{p'}=1$. We define
\begin{equation*}
	W_p(T) = \left\{u\in L^p(T,X):u'\in L^{p'}(T,X^*)\right\}.
\end{equation*}

Here, the derivative $u'$ is understood in the sense of vector-valued distributions. If $u\in W_p(T)$, then if we view $u(\cdot)$ as an $X^*$-valued function, then $u(\cdot)$ is absolutely continuous, hence it is differentiable almost everywhere. This derivative coincides with the distributional one and we have
\begin{equation*}
	W_p(T)\subseteq AC^{1,p'}(T,X^*) = W^{1,p'}((0,b),X^*).
\end{equation*}

We endow $W_p(T)$ with the norm
\begin{equation*}
	||u||_{W_p} = ||u||_{L^p(T,X)} + ||u'||_{L^{p'}(T,X^*)}\ \mbox{for all}\ u\in W_p(T).
\end{equation*}

Then $W_p(T)$ becomes a separable reflexive Banach space and we have
\begin{equation*}
	W_p(T)\hookrightarrow C(T,H)\ \mbox{and}\ W_p(T)\hookrightarrow L^p(T,H)\ \mbox{compactly}.
\end{equation*}

The elements of $W_p(T)$ satisfy the so-called ``integration by parts formula".
\begin{prop}\label{prop1}
	If $u,v\in W_p(T)$ and $\theta(t)=(u(t),v(t))$ for all $t\in T$, then $\theta(\cdot)$ is absolutely continuous and
	\begin{equation*}
		\frac{d\theta}{dt}(t) = \langle u'(t),v(t)\rangle + \langle u(t),v'(t)\rangle\ \mbox{for almost all}\ t\in T.
	\end{equation*}
\end{prop}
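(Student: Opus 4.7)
The plan is to proceed by the classical density/approximation strategy. First I would establish the formula for smooth functions, and then extend it to all of $W_p(T)$ by approximation, using the continuous embedding $W_p(T)\hookrightarrow C(T,H)$ to control the pointwise values of $\theta$.

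\smallskip

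\textbf{Step 1 (smooth case).} If $\varphi,\psi\in C^1(T,X)$, then viewing them as $H$-valued $C^1$ functions (via $X\hookrightarrow H$), the product rule in the Hilbert space $H$ gives $\frac{d}{dt}(\varphi(t),\psi(t))=(\varphi'(t),\psi(t))+(\varphi(t),\psi'(t))$, and using the identification $\langle\cdot,\cdot\rangle|_{H\times X}=(\cdot,\cdot)$, this equals $\langle\varphi'(t),\psi(t)\rangle+\langle\varphi(t),\psi'(t)\rangle$. Integrating yields the absolute continuity and the integral form of the identity.

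\smallskip

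\textbf{Step 2 (density).} Next I would invoke the fact that $C^1(T,X)$ (or even $C^\infty(\overline T,X)$) is dense in $W_p(T)$ for the $\|\cdot\|_{W_p}$-norm. This is a standard mollification argument: extend $u$ suitably beyond $[0,b]$ and convolve with a standard mollifier; this produces $u_n\in C^\infty(T,X)$ with $u_n\to u$ in $L^p(T,X)$ and $u_n'\to u'$ in $L^{p'}(T,X^*)$. Do the same for $v$ to get $v_n\to v$ in $W_p(T)$.

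\smallskip

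\textbf{Step 3 (passage to the limit).} By $W_p(T)\hookrightarrow C(T,H)$, we have $u_n\to u$ and $v_n\to v$ uniformly in $H$, hence $\theta_n(t)=(u_n(t),v_n(t))\to\theta(t)=(u(t),v(t))$ uniformly on $T$. Writing the Step 1 identity in integrated form,
\begin{equation*}
\theta_n(t)-\theta_n(s)=\int_s^t\bigl[\langle u_n'(\tau),v_n(\tau)\rangle+\langle u_n(\tau),v_n'(\tau)\rangle\bigr]\,d\tau,
\end{equation*}
I pass to the limit on the right using the duality pairings: $u_n'\to u'$ in $L^{p'}(T,X^*)$ together with $v_n\to v$ in $L^p(T,X)$ gives convergence of $\int_s^t\langle u_n',v_n\rangle\,d\tau$ to $\int_s^t\langle u',v\rangle\,d\tau$, and symmetrically for the other term. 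The limit identity
\begin{equation*}
\theta(t)-\theta(s)=\int_s^t\bigl[\langle u'(\tau),v(\tau)\rangle+\langle u(\tau),v'(\tau)\rangle\bigr]\,d\tau
\end{equation*}
then shows $\theta$ is absolutely continuous (since the integrand lies in $L^1(T)$ by Hölder's inequality applied to the dual pairings) and differentiating almost everywhere gives the stated formula.

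\smallskip

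\textbf{Main obstacle.} The only non-trivial ingredient is the density of smooth functions in $W_p(T)$; the mollification works cleanly in the interior but requires a careful extension near the endpoints $0$ and $b$ so that the approximating sequence and its derivative converge in the correct Bochner spaces simultaneously. Once density is in hand, everything else is bilinear continuity of the duality pairing.
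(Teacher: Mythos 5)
Your argument is correct and is the standard textbook proof of this background fact; the paper itself offers no proof of Proposition \ref{prop1}, quoting it as a known property of $W_p(T)$ (see, e.g., Zeidler \cite{12} or Gasinski \& Papageorgiou \cite{3}), so there is nothing in the paper to compare against. You correctly isolate the one substantive ingredient, namely the density of $C^1(\overline{T},X)$ in $W_p(T)$ via extension and mollification near the endpoints; with that lemma in hand, Steps 1 and 3 are routine bilinear-continuity estimates (note that using the embedding $W_p(T)\hookrightarrow C(T,H)$ in Step 3 is legitimate here since the paper records it before stating the proposition, even though in a self-contained development that embedding is usually established by the same density argument).
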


Let $V$ be a reflexive Banach space, $L:D\subseteq V\rightarrow V^*$  a linear maximal monotone map and $A:V\rightarrow2^{V^*}$. We say that $A$ is ``$L$-pseudomonotone", if the following conditions hold
\begin{itemize}
	\item [(\underline{a})] For every $v\in V, A(v)\in P_{wkc}(V^*)$.
	\item [(\underline{b})] $A$ is usc from every finite dimensional subspace of $V$ into $V^*$ furnished with the weak topology.
	\item [(\underline{c})] If $\{v_n\}_{n\geq1}\subseteq D, v_n\xrightarrow{w}v\ \mbox{in}\ V, L(v_n)\xrightarrow{w}L(v)\ \mbox{in}\ V^*,v^*_n\in A(v_n)$ \\
		$v^*_n\xrightarrow{w}v^*\ \mbox{in}\ V^*\ \mbox{and}\ \limsup_{n\rightarrow\infty}\langle v^*_n, v_n-v\rangle_V\leq0$, then $v^*\in A(v)$ and $\langle v^*_n,v_n\rangle_V\rightarrow\langle v^*,v\rangle_V$ (here by $\langle \cdot, \cdot\rangle_V$ we denote the duality brackets for the pair $(V^*,V)$).
\end{itemize}

Also, we say that $A(\cdot)$ is ``strongly coercive", if
\begin{equation*}
	\frac{\inf[\langle v^*,v\rangle_V:v^*\in A(v)]}{||v||_V}\rightarrow+\infty\ \mbox{as}\ ||v||_V\rightarrow+\infty.
\end{equation*}

$L$-pseudomonotone and strongly coercive maps exhibit remarkable surjectivity properties. More precisely, we have the following result (see Lions \cite{6} for $A(\cdot)$ single-valued) and Papageorgiou, Papalini \& Renzacci \cite{8} (for $A(\cdot)$ multivalued).
\begin{prop}\label{prop2}
	If $V$ is a reflexive Banach space, $L:D\subseteq V\rightarrow V^*$ is linear maximal monotone and $A:V\rightarrow2^{V^*}$ is bounded, $L$-pseudomonotone and strongly coercive, then $L+A$ is surjective (that is, $R(L+A)=V^*$).
\end{prop}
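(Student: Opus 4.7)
My plan is a Galerkin approximation together with an appeal to condition (c) to pass to the limit. Fix $f\in V^*$. Since $L$ is linear maximal monotone on the reflexive space $V$, its domain $D$ is dense in $V$; I pick a nondecreasing sequence $\{V_n\}_{n\ge 1}$ of finite-dimensional subspaces of $D$ whose union is dense in $D$ with respect to the graph norm of $L$ (hence also dense in $V$). For each $n$, I seek $v_n\in V_n$ and $w_n\in A(v_n)$ satisfying
\[
\langle Lv_n+w_n,u\rangle_V=\langle f,u\rangle_V\qquad\text{for all } u\in V_n.
\]
On $V_n$ the multifunction $L|_{V_n}+A|_{V_n}$ has nonempty, convex, compact values in $V_n^*$ (by (\underline{a}) and finite-dimensionality), is upper semicontinuous into $V_n^*$ (by (\underline{b}) and the weak-to-norm continuity of $P_n^*\colon V^*\to V_n^*$), and is strongly coercive, since $\langle Lv,v\rangle_V\ge 0$ for $v\in D$ (linearity and monotonicity of $L$, with $L(0)=0$) and $A$ itself is strongly coercive. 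A finite-dimensional surjectivity result (e.g.\ Debrunner--Flor, or Kakutani's fixed point theorem) then produces the desired $v_n, w_n$.

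Strong coercivity yields $\|v_n\|_V\le M$ and, by boundedness of $A$, $\|w_n\|_{V^*}\le M'$. Passing to a subsequence, $v_n\xrightarrow{w}v$ in $V$ and $w_n\xrightarrow{w}w^*$ in $V^*$. To invoke (\underline{c}) I must show that $\{Lv_n\}$ is bounded in $V^*$ with $Lv_n\xrightarrow{w}Lv$ and $v\in D$. Here I would use the estimate $\langle Lv_n,v_n\rangle_V=\langle f-w_n,v_n\rangle_V\le C$ together with the monotonicity of $L$ (which gives $\langle Lv_n,z\rangle_V\le\langle Lv_n,v_n\rangle_V-\langle Lz,v_n-z\rangle_V$ for every $z\in D$) to derive a uniform $V^*$-bound, and then the weak closedness of the graph of the linear maximal monotone operator $L$ in $V\times V^*$ to identify the weak limit. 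The required inequality
\[
\limsup_{n\to\infty}\langle w_n,v_n-v\rangle_V\le 0
\]
follows by writing $\langle w_n,v_n-v\rangle_V=\langle f,v_n-v\rangle_V-\langle Lv_n,v_n-v\rangle_V$, choosing $z_n\in V_n$ with $z_n\to v$ in $V$ and $Lz_n\to Lv$ in $V^*$ (possible by graph-norm density), and using $\langle Lv_n-Lz_n,v_n-z_n\rangle_V\ge 0$. Condition (\underline{c}) then gives $w^*\in A(v)$ and $\langle w_n,v_n\rangle_V\to\langle w^*,v\rangle_V$; passing to the limit in the Galerkin equation against elements of the dense set $\bigcup_n V_n$ yields $Lv+w^*=f$ in $V^*$, as required.

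The principal obstacle will be establishing boundedness of $\{Lv_n\}$ in $V^*$ and identifying its weak limit as $Lv$ with $v\in D$: the Galerkin equation controls only the projection $P_n^*Lv_n$ in $V_n^*$, so promoting this to a genuine $V^*$-bound forces a delicate use of the monotonicity of $L$ (which provides one-sided control of $\langle Lv_n,z\rangle_V$ for each $z\in D$) in combination with the energy estimate on $\langle Lv_n,v_n\rangle_V$ and the density of $D$ in $V$. Once this bound is in hand, the weak closedness of the graph of the linear maximal monotone $L$ pins down the limit, and the remainder of the argument is routine.
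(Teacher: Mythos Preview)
The paper does not actually prove Proposition~\ref{prop2}; it is quoted as a known result with references to Lions \cite{6} (single-valued case) and Papageorgiou--Papalini--Renzacci \cite{8} (multivalued case). So there is no ``paper's own proof'' to compare against.

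That said, your Galerkin sketch has a genuine gap precisely where you anticipate it. The monotonicity inequality you write down yields, for each $z\in D$,
\[
|\langle Lv_n,z\rangle_V|\le C+\|Lz\|_{V^*}\bigl(M+\|z\|_V\bigr),
\]
and the right-hand side depends on $\|Lz\|_{V^*}$, which is unbounded over $\{z\in D:\|z\|_V\le 1\}$ (unless $L$ is bounded). Thus you obtain only pointwise boundedness of $\{Lv_n\}$ on the dense set $D$ with non-uniform constants, and this does \emph{not} promote to a $V^*$-bound: the uniform boundedness principle needs pointwise bounds on all of $V$, not on a dense subspace. Consequently you cannot extract a weakly convergent subsequence of $\{Lv_n\}$ in $V^*$, and condition~(\underline{c}) never becomes applicable.

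The device used in the cited proofs is to pass to the reflexive space $Y=D(L)$ equipped with the graph norm $\|y\|_Y=\|y\|_V+\|Ly\|_{V^*}$. On $Y$ the operator $L$ is bounded into $V^*\hookrightarrow Y^*$, and one checks that $L+A\colon Y\to 2^{Y^*}$ is (ordinary) pseudomonotone --- this is exactly what condition~(\underline{c}) is tailored to deliver, because weak convergence in $Y$ \emph{means} $v_n\xrightarrow{w}v$ in $V$ together with $Lv_n\xrightarrow{w}Lv$ in $V^*$. Coercivity on $Y$ fails in general, so one adds a regularising term $\varepsilon J_Y$ (the duality map of $Y$), solves the resulting surjective problem on $Y$, and then lets $\varepsilon\to 0$ using the maximal monotonicity of $L$ to identify the limit. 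The graph-norm framework, not a sharper use of monotonicity on $V$, is the missing ingredient in your outline.
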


Suppose that $Y$ is a Banach space and $\{C_n\}_{n\geq1}\subseteq2^Y\backslash\{\emptyset\}$. We define
$$
\begin{array}{ll}
	w-\limsup_{n\rightarrow\infty}C_n=\left\{y\in Y: y=w-\lim_{k\rightarrow\infty}y_{n_k}, y_{n_k}\in C_{n_k}, n_1<n_2<\dots<n_k<\dots\right\}, \\
	\liminf_{n\rightarrow\infty}C_n = \left\{y\in Y: y=\lim_{n\rightarrow\infty}y_n, y_n\in C_n, n\in\NN\right\} = \left\{y\in Y: \lim_{n\rightarrow\infty}d(y,C_n)=0\right\}.
\end{array}
$$

We denote by $\Gamma_0(Y)$  the cone of all lower semicontinuous, convex proper functions. So, $\varphi\in\Gamma_0(Y)$ if $\varphi: Y\rightarrow\overline{\RR}=\RR\cup\{+\infty\}$ is lower semicontinuous, convex and dom $\varphi=\{y\in Y: \varphi(y)<+\infty\}$ (the effective domain of $\varphi$) is nonempty. By $\partial\varphi(\cdot)$ we denote the subdifferential in the sense of convex analysis. So,
\begin{equation*}
	\partial\varphi(y)=\left\{y^*\in Y^*: \langle y^*,h\rangle\leq\varphi(y+h) - \varphi(y)\ \mbox{for all}\ h\in Y\right\}.
\end{equation*}

It is well known that $\partial\varphi: Y\rightarrow2^{Y^*}$ is maximal monotone.

Given a nonempty set $C\subseteq Y$, we set
\begin{equation*}
	|C|=\sup\left\{||y||_Y:y\in C\right\}.
\end{equation*}

Finally, we denote by $||\cdot||_w$ the ``weak norm" on the Lebesgue-Bochner space $L^1(T,Y)$, defined by
\begin{equation*}
	||h||_w=\sup\left\{||\int^t_s h(\tau) d\tau||_Y: 0\leq s\leq t\leq b\right\}
\end{equation*}
or, equivalently,
\begin{equation*}
	||h||_w=\sup\left\{||\int^t_0 h(\tau) d\tau||_Y: 0\leq t\leq b\right\}.
\end{equation*}

The norm is equivalent to the Petils norm on $L^1(T,Y)$ (see Egghe \cite{2}). By $L^{1}_w(T,Y)$ we denote the space $L^1(T,Y)$ furnished with the weak norm.

\section{The ``convex" problem}

In this section we prove an existence theorem for the ``convex" problem, that is, we assume that the multivalued perturbation $F(t,x)$ is convex-valued.

We work on an evolution triple $(X,H,X^*)$ with $X\hookrightarrow H$ compactly. Hence $H\hookrightarrow X^*$ compactly, too. We impose two sets of hypotheses on the data $A(t,x)$ and $\partial\varphi(x)$.

\smallskip
$H(A)$: $A:T\times X\rightarrow X^*$ is a map such that
\begin{itemize}
	\item [(i)] for all $x\in X,\ t\mapsto A(t,x)$ is measurable;
	\item [(ii)] for almost all $t\in T,\ x\mapsto A(t,x)$ is demicontinuous (that is, $x_n\rightarrow x$ in $X$ implies $A(t,x_n)\xrightarrow{w}A(t,x)$) and
		\begin{equation*}
			c_0 ||x||^p\leq\langle A(t,x),x\rangle\ \mbox{for almost all}\ t\in T,\ \mbox{and all}\ x\in\RR;
		\end{equation*}
	\item [(iii)] $||A(t,x)||_*\leq a_1(t) + c_1||x||^{p-1}$ for almost all $t\in T$, and all $x\in X$, with $a_1\in L^{p'}(T)$ and $c_1>0$.
\end{itemize}

\smallskip
$H(\varphi)$: $\varphi\in\Gamma_0(H)$ is bounded above on bounded sets, for all $u\in L^p(T,X)$ we have $S^{p'}_{\partial\varphi(u(\cdot))}\neq\emptyset$, $\partial\varphi(0)\subseteq H$ is bounded and for all $(u,h), (u',h')\in {\rm Gr}\,\partial\varphi$ we have
\begin{equation*}
	c_0|u-u'|^2\leq(h-h',u-u').
\end{equation*}

Alternatively, we can assume the following conditions on $A$ and $\varphi$.

\smallskip
$H(A)'$: $A:T\times X\rightarrow X^*$ is a map such that hypotheses $H(A)'(i),(iii)$ are the same as hypotheses $H(A)(i),(iii)$ and
\begin{itemize}
	\item [(ii)] for almost all $t\in T,\ x\mapsto A(t,x)$ is demicontinuous,
	\begin{equation*}
		\begin{array}{ll}
			c_0||x||^p \leq \langle A(t,x),x\rangle\ \mbox{for almost all}\ t\in T,\ \mbox{and all}\ x\in X\ \mbox{with}\ c_0>0, \\
			c_0||x-y||^2\leq\langle A(t,x) - A(t,y), x-y\rangle\ \mbox{for almost all}\ t\in T,\ \mbox{and all}\ x,y\in X.
		\end{array}
	\end{equation*}
\end{itemize}

\smallskip
$H(\varphi)'$: $\varphi\in\Gamma_0(H)$ is bounded above on bounded sets, for all $u\in L^p(T,X)$, $S^{p'}_{\partial\varphi(u(\cdot))}\neq\emptyset$ and $\partial\varphi(0)\subseteq H$ is bounded.

\smallskip
The hypotheses on the multivalued perturbation $F(t,x)$ are:

\smallskip
$H(F)_1$: $F:T\times H\rightarrow P_{f_c}(H)$ is a multifunction such that
\begin{itemize}
	\item [(i)] for every $x\in H,\ t\mapsto F(t,x)$ is graph measurable;
	\item [(ii)] for almost all $t\in T,\ {\rm Gr}\,F(t, \cdot)\subseteq H\times H_w$ is sequentially closed (by $H_w$ we denote the Hilbert space $H$ furnished with the weak topology);
	\item [(iii)] there exists $M>0$ such that
	\begin{equation*}
		\begin{array}{ll}
			0\leq(h,x)\ \mbox{for almost all}\ t\in T,\ \mbox{and all}\ |x|=M,\ h\in F(t,x), \\
			|F(t,x)|\leq a_M(t)\ \mbox{for almost all}\ t\in T,\ \mbox{and all}\ |x|\leq M,\ \mbox{with}\ a_M\in L^{p'}(T).
		\end{array}
	\end{equation*}
\end{itemize}

Alternatively, we may assume the following conditions on $F(t,x)$:

\smallskip
$H(F)'_1$: $F:T\times H\rightarrow P_{f_c}(H)$ is a multifunction such that hypotheses $H(F)_1'(i),(ii)$ are the same as the corresponding hypotheses $H(F)_1(i),(ii)$ and
\begin{itemize}
	\item [(iii)] $|F(t,x)|\leq k(t)[1+|x|]$ for almost all $t\in T$, and all $x\in H$, \mbox{with}\ $k\in L^{p'}(T)$.
\end{itemize}
\begin{remark}
	Hypotheses $H(F)_1(i),(ii)$ imply that for all $u\in L^\infty(T,H)$ the multifunction $u\mapsto F(t,u(t))$ admits a measurable selection. Indeed, let $\{s_n\}_{n\geq1}$ be a sequence of simple functions such that $s_n(t)\rightarrow u(t)$ as $n\rightarrow\infty$ and $|s_n(t)|\leq|u(t)|$ for almost all $t\in T$, all $n\in\NN$. Then hypothesis $H(F)_1(i)$ and the Yankov-von Neumann-Aumann selection theorem imply that there exists a measurable function  $h_n:T\rightarrow H$ such that $h_n(t)\in F(t, s_n(t))$ for almost all $t\in T$ and all $n\in\NN$. Then $\{h_n\}_{n\geq1}\subseteq L^\infty(T,H)$ is bounded and so we may assume that $h_n\xrightarrow{w} h$ in $L^1(T,H)$. Invoking Proposition 3.9 of Hu \& Papageorgiou \cite[p. 694]{4} and using hypothesis $H(F)_1(ii)$ we conclude that $h(t)\in F(t,u(t))$ for almost all $t\in T$. Hypothesis $H(F)_1(iii)$ is a multivalued version of a condition due to Hartman (see \cite{8}).
\end{remark}

Let $x_0\in H$ and $h\in L^{p'}(T,H)$ and consider the following Cauchy problem:
\begin{equation}\label{eq2}
	\left\{
		\begin{array}{ll}
			-u'(t)\in A(t,u(t)) + \partial\varphi(u(t)) + h(t)\ \mbox{for almost all}\ t\in T, \\
			u(0) = x_0.
		\end{array}
	\right\}
\end{equation}
\begin{prop}\label{prop3}
	If hypotheses $H(A),\ H(\varphi)$ or $H(A)',\ H(\varphi)'$ hold, then problem (\ref{eq2}) admits a unique solution $u_0\in W_p(T)$.
\end{prop}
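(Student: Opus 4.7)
The plan is to reformulate (\ref{eq2}) as an abstract operator inclusion on $V = L^p(T,X)$ and then apply Proposition \ref{prop2}. Since the trace $u \mapsto u(0)$ is a continuous surjection from $W_p(T)$ onto $H$, I first pick $\eta \in W_p(T)$ with $\eta(0) = x_0$ and substitute $u = v + \eta$, reducing matters to an inclusion on the closed linear subspace $D(L) = \{w \in W_p(T) : w(0) = 0\}$, on which $L(w) = w'$ is linear maximal monotone (via Proposition \ref{prop1}). Introducing the Nemytski operators
\begin{equation*}
\hat{A}(v)(t) = A(t, v(t)+\eta(t)), \qquad \hat{N}(v) = S^{p'}_{\partial\varphi(v(\cdot)+\eta(\cdot))},
\end{equation*}
the latter nonempty by $H(\varphi)$, problem (\ref{eq2}) becomes: find $v \in D(L)$ with $L(v) + \hat{A}(v) + \hat{N}(v) \ni -h - \eta'$ in $V^*$.

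Next, I would verify that $\hat{A} + \hat{N}$ is bounded, strongly coercive, and $L$-pseudomonotone, so that Proposition \ref{prop2} applies. Boundedness in $V^*$ follows from $H(A)(iii)$ and from the fact that $\varphi \in \Gamma_0(H)$ bounded above on bounded sets forces $\partial\varphi$ to map bounded subsets of $H$ to bounded subsets of $H$. Strong coercivity follows from $c_0\|x\|^p \le \langle A(t,x), x\rangle$ combined with the subgradient lower bound $(w, y) \ge (w_0, y)$ for any $w \in \partial\varphi(y)$ and any fixed $w_0 \in \partial\varphi(0)$, with $\partial\varphi(0)$ bounded by $H(\varphi)$; the $\eta$-cross-terms are absorbed by the dominant $\|v\|^p$-term.

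The \emph{main obstacle} is the $L$-pseudomonotonicity verification. Given $v_n \xrightarrow{w} v$ in $V$, $L(v_n) \xrightarrow{w} L(v)$ in $V^*$, $v_n^* = \hat{A}(v_n) + w_n$ with $w_n \in \hat{N}(v_n)$, $v_n^* \xrightarrow{w} v^*$ in $V^*$, and $\limsup_n \langle v_n^*, v_n - v\rangle \le 0$, the compact embedding $W_p(T) \hookrightarrow L^p(T,H)$ upgrades $v_n \xrightarrow{w} v$ to $v_n \to v$ in $L^p(T,H)$, hence along a subsequence $v_n(t) \to v(t)$ in $H$ for a.a.\ $t \in T$. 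Proposition \ref{prop1} together with $v_n(0) = 0$ gives $\langle L(v_n), v_n\rangle = \tfrac{1}{2}|v_n(b)|^2$, and weak lower semicontinuity of the norm in $H$ yields $\liminf_n \langle L(v_n), v_n\rangle \ge \langle L(v), v\rangle$. Combined with the $\limsup$ hypothesis, this controls $\limsup \langle \hat{A}(v_n) + w_n, v_n - v\rangle \le 0$. Demicontinuity of $A(t,\cdot)$, the a.e.\ pointwise convergence, and the $H(A)(iii)$ growth bound (via a Vitali-type argument) then give $\hat{A}(v_n) \xrightarrow{w} \hat{A}(v)$ in $V^*$. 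For the subdifferential term, the graph of $\partial\varphi$ is sequentially closed in the strong--weak topology of $H \times H$, so along a subsequence $w_n \xrightarrow{w} w_\infty$ in $L^{p'}(T,H)$ with $w_\infty(t) \in \partial\varphi(v(t)+\eta(t))$ a.e., i.e.\ $w_\infty \in \hat{N}(v)$. Hence $v^* = \hat{A}(v) + w_\infty \in \hat{A}(v) + \hat{N}(v)$ and $\langle v_n^*, v_n\rangle \to \langle v^*, v\rangle$, as required.

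Proposition \ref{prop2} then yields $v \in D(L)$ solving the inclusion, so $u = v + \eta \in W_p(T)$ solves (\ref{eq2}). For uniqueness, let $u_1, u_2$ be two solutions with selections $w_i \in \partial\varphi(u_i)$; subtracting and pairing with $u_1 - u_2$, Proposition \ref{prop1} gives
\begin{equation*}
\tfrac{1}{2}\tfrac{d}{dt}|u_1(t)-u_2(t)|^2 + \langle A(t,u_1)-A(t,u_2), u_1-u_2\rangle + (w_1-w_2, u_1-u_2) = 0.
\end{equation*}
Under $H(A)'$ the strong monotonicity of $A$ combined with monotonicity of $\partial\varphi$ makes the middle and last terms nonnegative; under $H(\varphi)$ the $c_0$-strong monotonicity of $\partial\varphi$ in $H$ plays the analogous role. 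Together with $u_1(0) = u_2(0) = x_0$ and a Gronwall-type estimate, this forces $u_1 \equiv u_2$.
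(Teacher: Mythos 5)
Your overall architecture is the same as the paper's: recast (\ref{eq2}) as $L(u)+a(u)+\partial\Phi(u)\ni -h$ on $V=L^p(T,X)$ with $a,\partial\Phi$ the Nemytski operators, verify boundedness, strong coercivity and $L$-pseudomonotonicity, invoke Proposition \ref{prop2}, and prove uniqueness by subtracting, testing with the difference and using Proposition \ref{prop1} together with the strong monotonicity of either $A(t,\cdot)$ or $\partial\varphi$. Your translation $u=v+\eta$ to bring the domain of $L$ to the linear subspace $\{w(0)=0\}$ is a harmless (arguably cleaner) variant; the coercivity, boundedness and uniqueness parts are fine and match the paper.

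There is, however, a genuine gap in your $L$-pseudomonotonicity verification, precisely at the step ``demicontinuity of $A(t,\cdot)$, the a.e.\ pointwise convergence, and the $H(A)(iii)$ growth bound (via a Vitali-type argument) then give $\hat A(v_n)\xrightarrow{w}\hat A(v)$ in $V^*$.'' The pointwise convergence you extract from the compact embedding $W_p(T)\hookrightarrow L^p(T,H)$ is convergence $v_n(t)\to v(t)$ in $H$, whereas demicontinuity of $A(t,\cdot)$ is stated with respect to strong convergence in $X$: $x_n\to x$ in $X$ implies $A(t,x_n)\xrightarrow{w}A(t,x)$ in $X^*$. You have no strong (indeed not even a.e.\ bounded) convergence of $v_n(t)$ in $X$, so demicontinuity cannot be applied pointwise, and no Vitali argument can repair this. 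This is exactly the difficulty that forces the paper to proceed differently: it invokes the known fact (Theorem 2.35 of Hu \& Papageorgiou, Vol.~II) that the Nemytski operator $a(\cdot)$ is itself $L$-pseudomonotone, a nontrivial result whose proof uses the monotone structure of $A(t,\cdot)$ together with the inequality $\limsup_n((a(u_n),u_n-u))\leq 0$ --- which the paper first isolates by showing $((g_n,u_n-u))\to 0$ using $u_n\to u$ in $L^p(T,H)$ and the $L^{p'}(T,H)$-boundedness of the subgradient selections. Your treatment of the subdifferential term (demiclosedness of ${\rm Gr}\,\partial\varphi$, hence of ${\rm Gr}\,\partial\Phi$ in $L^p(T,H)\times L^{p'}(T,H)_w$) is consistent with the paper's, but to close the argument you must replace the direct weak-convergence claim for $\hat A$ by an appeal to the $L$-pseudomonotonicity of $a$ (or reprove that property, which requires more than demicontinuity plus pointwise $H$-convergence).
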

\begin{proof}
	First we do the proof when $H(A)$ and $ H(\varphi)$ hold.
	
	Consider the map $L:D\subseteq L^p(T,X)\rightarrow L^{p'}(T,X^*)$ defined by
	\begin{equation*}
		L(u)=u'\ \mbox{for all}\ u\in D=\{u\in W_p(T):\ u(0)=x_0\}.
	\end{equation*}
	
	By Lemma 8.93 of Roubicek \cite[p. 289]{roubicek} we know that $L(\cdot)$ is maximal monotone and densely defined.
	
	We introduce the map $a:L^p(T,X)\rightarrow L^{p'}(T,X^*)$ and the functional $\Phi:L^p(T,H)\rightarrow\overline{\RR}=\RR\cup\{+\infty\}$ defined by
	\begin{equation*}
		\begin{array}{ll}
			a(u)(\cdot) = A(\cdot,u(\cdot))\ \mbox{for all}\ u\in L^p(T,X), \\
			\Phi(u) = \int^b_0\varphi(u(t))dt\ \mbox{for all}\ u\in L^p(T,H).
		\end{array}
	\end{equation*}
	
	Theorem 2.35 in Hu \& Papageorgiou \cite[p. 41]{5} implies that
	\begin{equation*}
		a(\cdot)\ \mbox{is}\ L\mbox{-pseudomonotone}.
	\end{equation*}
	
	Also, $\Phi\in\Gamma_0(L^p(T,H))$ and $\partial\Phi(u)=S^{p'}_{\partial\varphi(u(\cdot))}\subseteq L^{p'}(T,H) = L^p(T,H^*)$ for all $u\in L^p(T,H)$ (see Theorem 9.24 in Hu \& Papageorgiou \cite[p. 271]{4}). Moreover, hypothesis $H(\varphi)$ implies that $L^p(T,X)\subseteq D(\partial\Phi)$.
	
	We claim that the multivalued map $u\mapsto a(u)+\partial\Phi(u)$ is $L$-pseudomonotone. Evidently, this multifunction has values in $P_{wkc}(L^{p'}(T,X^*))$ and it is usc from every finite dimensional subspace of $L^p(T,X)$ into $L^{p'}(T,X^*)_w$ (see Proposition 2.23 in Hu \& Papageorgiou \cite[p. 43]{4}). Consider two sequences $\{u_n\}_{n\geq1}\subseteq W_p(T)$ and $\{g_n\}_{n\geq1}\subseteq L^{p'}(T,H)$ such that
	\begin{equation}\label{eq3}
		\begin{array}{ll}
			u_n\xrightarrow{w}u\ \mbox{in}\ W_p(T),\ g_n\xrightarrow{w}g\ \mbox{in}\ L^{p'}(T,H), \ g_n\in\partial\Phi(u_n)\ \mbox{for all}\ n\in\NN, \\
			\limsup_{n\rightarrow\infty}((a(u_n)+g_n,u_n-u))\leq0
		\end{array}	
	\end{equation}
	with $((\cdot, \cdot))$ denoting the duality brackets for the pair $(L^{p'}(T,X^*),L^p(T,X))$. Recall that $L^p(T,X)^*=L^{p'}(T,X^*)$ (see Theorem 2.2.9 in Gasinski \& Papageorgiou \cite[p. 129]{3}). So
	\begin{equation*}
		((g,h))=\int^b_0\langle g(t),h(t)\rangle dt\ \mbox{for all}\ (g,h)\in L^{p'}(T,X^*)\times L^p(T,X).
	\end{equation*}
	
	We know that $W_p(T)\hookrightarrow L^p(T,H)$ compactly. Therefore we have
	\begin{equation}\label{eq4}
		u_n\rightarrow u\ \mbox{in}\ L^p(T,H)\ \mbox{(see (\ref{eq3}))}.
	\end{equation}
	
	On the other hand, $\partial\Phi(\cdot)$ is maximal monotone and so ${\rm Gr}\,\partial\Phi$ is sequently closed in $L^p(T,H)\times L^{p'}(T,H)_w$. Then it follows  from (\ref{eq3}) and (\ref{eq4}) that
	\begin{equation}\label{eq5}
		(u,g)\in {\rm Gr}\,\partial\Phi.
	\end{equation}
	
	Also, we have
	\begin{eqnarray}\label{eq6}
		&&((g_n,u_n-u))=\int^b_0\langle g_n(t), u_n(t)-u(t)\rangle dt = \int^b_0(g_n(t), u_n(t)-u(t))dt\rightarrow 0\nonumber\\
		&& \mbox{as}\ n\rightarrow\infty\ \mbox{(see (\ref{eq4}))}.
	\end{eqnarray}
	
	Returning to the last convergence in (\ref{eq3}) and using (\ref{eq6}), we obtain
	\begin{equation}\label{eq7}
		\limsup_{n\rightarrow\infty}((a(u_n),u_n-u))\leq0.
	\end{equation}
	
	But recall that $a(\cdot)$ is $L$-pseudomonotone. So, from (\ref{eq7}) we infer that
	\begin{equation}\label{eq8}
		a(u_n)\xrightarrow{w}a(u)\ \mbox{in}\ L^{p'}(T,X^*)\ \mbox{and}\ ((a(u_n),u_n))\rightarrow ((a(u),u)).
	\end{equation}
	
	Then from (\ref{eq5}), (\ref{eq6}) and (\ref{eq11}), we conclude that
	\begin{equation}\label{eq9}
		u\mapsto a(u)+\partial\Phi(u)\ \mbox{is}\ L\mbox{-pseudomonotone}.
	\end{equation}
	
	For every $u\in L^{p}(T,X)$ and every $g\in\partial\Phi(u)$, we have
	\begin{equation}\label{eq10}
		((a(u)+g,u)) = \int^b_0\langle A(t, u(t)),u(t)\rangle dt + \int^b_0(g(t),u(t))dt.
	\end{equation}
	
	Hypothesis $H(A)(ii)$ implies that
	\begin{equation}\label{eq11}
		c_0||u||^p_{L^p(T,X)}\leq\int^b_0\langle A(t,u(t)),u(t)\rangle dt.
	\end{equation}
	
	Also, $g\in \partial\Phi(u)$ implies that
	\begin{eqnarray}\label{eq12}
		& g(t)\in\partial\varphi(u(t)) & \mbox{for almost all}\ t\in T, \nonumber\\
	\quad	\Rightarrow & (g(t),u(t)) & = (g(t)-h,u(t)) + (h,u(t))\ \mbox{for all}\ h\in\partial\varphi(0) \nonumber\\
		& & \geq (h,u(t))\ \mbox{(since $\partial\varphi(\cdot)$ is monotone)}, \nonumber\\
		\Rightarrow & \int^b_0(g(t),u(t))dt & \geq 	-||u||_{L^p(T,H)}|\partial\Phi(0)| \nonumber\\
		& & \geq -c_1||u||_{L^p(T,X)}|\partial\Phi(0)|\ \mbox{for some}\ c_1>0\ \mbox{(recall that $X\hookrightarrow H$)} \nonumber\\
		& & \geq	 -c_2||u||_{L^p(T,X)}|\ \mbox{for some}\ c_2>0\ \mbox{(see hypothesis $H(\varphi)$)}.
	\end{eqnarray}
	
	We return to (\ref{eq10}) and use (\ref{eq11}), (\ref{eq12}). Then
	\begin{eqnarray}
		& ((a(u)+g,u))\geq c_0||u||^p_{L^p(T,X)}-c_2||u||_{L^p(T,X)}, \nonumber\\
		\Rightarrow & u\mapsto a(u) + \partial\Phi(u)\ \mbox{is strongly coercive.} \label{eq13}
	\end{eqnarray}
	
	Then (\ref{eq9}) and (\ref{eq13}) permit the use of Proposition \ref{prop2} and so
	\begin{equation*}
		R(L+a+\partial\Phi)=L^{p'}(T,X^*).
	\end{equation*}
	
	Therefore we can find $u_0\in W_p(T)$ such that
	\begin{equation*}
		-u'_0\in a(u_0)+\partial\Phi(u_0)+h.
	\end{equation*}
	
	Next, we show that this solution is unique. To this end, suppose that $v_0\in W_p(T)$ is another solution of problem (\ref{eq2}). We have
	\begin{eqnarray}
		&&-u_0'(t) = A(t,u_0(t)) + g_{u_0}(t) + h(t)\ \mbox{for almost all}\ t\in T,\quad
		 u_0(0)=x_0,\ g_{u_0}\in\partial\Phi(u_0), \label{eq14} \\
		&&-v_0'(t) = A(t,v_0(t)) + g_{v_0}(t) + h(t)\ \mbox{for almost all}\ t\in T,\quad
		v_0(0)=x_0,\ g_{v_0}\in\partial\Phi(v_0). \label{eq15}
	\end{eqnarray}
	
	We subtract (\ref{eq15}) from (\ref{eq14}) and obtain
	\begin{equation}\label{eq16}
		u_0'(t)-v_0'(t) + A(t,u_0(t)) - A(t,v_0(t)) + g_{v_0}(t) - g_{v_0}(t) = 0\ \mbox{for almost all}\ t\in T.
	\end{equation}
	
	On (\ref{eq16}) we act with $u_0(t)-v_0(t)\in X$ and then integrate. Using the integration by parts formula (see Proposition \ref{prop1}), the monotonicity of $A(t,\cdot)$ and hypothesis $H(\varphi)$, we have
	\begin{eqnarray*}
		&& |u_0(t)-v_0(t)|^2\leq -c_0\int^t_0|u_0(s)-v_0(s)|^2ds\leq0\ \mbox{for all}\ t\in T, \\
		&\Rightarrow & u_0=v_0.
	\end{eqnarray*}
	
	This proves the uniqueness of the solution $u_0\in W_p(T)$ of problem (\ref{eq2}).
	
	Now suppose that hypotheses $H(A)'$ and $ H(\varphi)'$ hold. The existence part of the above proof remains unchanged. For the uniqueness part, the only change is that now we have
	\begin{eqnarray*}
		&& |u_0(t)-v_0(t)|^2\leq-c_0\int^t_0|u_0(s)-v_0(s)|^2ds\leq0\ \mbox{(see hypothesis $H(A)'(ii)$)} \\
		&\Rightarrow & u_0=v_0.
	\end{eqnarray*}
The proof is now complete.
\end{proof}

We can introduce the Poincar\'e map $K:H\rightarrow H$ defined by
\begin{equation*}
	K(x_0) = u(b),
\end{equation*}
where $u\in W_p(T)$ is the unique solution of (\ref{eq2}) (see Proposition \ref{prop3}).

\begin{prop}\label{prop4}
	If hypotheses $H(A),\ H(\varphi)$ of $H(A)',\ H(\varphi')$ hold, then $K(\cdot)$ is a contraction.
\end{prop}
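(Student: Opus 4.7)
The plan is to pick two initial conditions $x_0, y_0\in H$, let $u, v\in W_p(T)$ be the corresponding unique solutions of (\ref{eq2}) provided by Proposition \ref{prop3}, and estimate $|u(b)-v(b)|$ against $|x_0-y_0|$ by the standard energy method. Concretely, $u$ and $v$ satisfy
\begin{equation*}
-u'(t) = A(t,u(t)) + g_u(t) + h(t),\qquad -v'(t) = A(t,v(t)) + g_v(t) + h(t)
\end{equation*}
almost everywhere, with $g_u(t)\in\partial\varphi(u(t))$ and $g_v(t)\in\partial\varphi(v(t))$. Subtracting the two identities, taking the duality pairing against $u(t)-v(t)\in X$, and applying the integration by parts formula (Proposition \ref{prop1}) to $w=u-v\in W_p(T)$ gives
\begin{equation*}
\tfrac{1}{2}\tfrac{d}{dt}|u(t)-v(t)|^2 + \langle A(t,u(t))-A(t,v(t)), u(t)-v(t)\rangle + (g_u(t)-g_v(t), u(t)-v(t)) = 0.
\end{equation*}

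Under hypotheses $H(A),\ H(\varphi)$, the $A$-term is nonnegative by monotonicity of $A(t,\cdot)$, while $H(\varphi)$ forces $(g_u(t)-g_v(t), u(t)-v(t))\geq c_0|u(t)-v(t)|^2$. This yields
\begin{equation*}
\tfrac{d}{dt}|u(t)-v(t)|^2 \leq -2c_0|u(t)-v(t)|^2,
\end{equation*}
and Gronwall's inequality gives $|u(b)-v(b)|\leq e^{-c_0 b}|x_0-y_0|$, so $K$ is a contraction with constant $e^{-c_0 b}<1$.

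Under the alternative hypotheses $H(A)',\ H(\varphi)'$, the roles swap: the $A$-term dominates $c_0\|u(t)-v(t)\|^2$ by strong monotonicity of $A(t,\cdot)$, while the subgradient term is merely nonnegative by monotonicity of $\partial\varphi$. Using $|\cdot|\leq\hat{c}_1\|\cdot\|$ from the embedding $X\hookrightarrow H$, we get $\|u(t)-v(t)\|^2\geq\hat{c}_1^{-2}|u(t)-v(t)|^2$, and Gronwall again produces $|u(b)-v(b)|\leq\exp(-c_0 b/\hat{c}_1^2)|x_0-y_0|$, a contraction.

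No substantial obstacle is anticipated: the strong monotonicity of either $\partial\varphi$ (in the $H$-norm) or $A(t,\cdot)$ (in the $X$-norm, downgraded to the $H$-norm via the embedding constant) is precisely what forces the exponential decay of $|u(t)-v(t)|^2$, and the integration by parts formula legitimates the differentiation of $|u(t)-v(t)|^2$. The only mild subtlety is keeping track of the two contraction constants in the two alternative hypothesis sets and using $H(\varphi)'$ only through monotonicity in the second case.
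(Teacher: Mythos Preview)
Your proof is correct and is essentially identical to the paper's: subtract the two equations, apply the integration-by-parts formula to $|u(t)-v(t)|^2$, use the strong monotonicity of $\partial\varphi$ (resp.\ of $A(t,\cdot)$, together with the embedding $X\hookrightarrow H$) to obtain a differential inequality yielding exponential decay, and read off the contraction constant at $t=b$. Your tracking of the constants is in fact slightly cleaner than the paper's.
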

\begin{proof}
	Let $x_0,\hat{x}_0\in H$ be two distinct initial conditions for problem (\ref{eq2}) and let $u_0,\hat{u}\in W_p(T)$ be the corresponding unique solutions of the Cauchy problem (\ref{eq2}) (see Proposition \ref{prop3}). We have
	\begin{eqnarray}
		-u_0'(t)\in A(t,u_0(t)) + \partial\varphi(u_0(t)) + h(t)\ \mbox{for almost all}\ t\in T, \ u_0(0)=x_0, \label{eq17}\\
		-v_0'(t)\in A(t,v_0(t)) + \partial\varphi(v_0(t)) + h(t)\ \mbox{for almost all}\ t\in T, \ v_0(0)=x_0. \label{eq18}
	\end{eqnarray}
	
	First we assume that hypotheses $H(A)$ and $ H(\varphi)$ hold.
	
	As before, subtracting (\ref{eq18}) from (\ref{eq17}) and using Proposition \ref{prop1} and hypothesis $H(\varphi)$, we obtain
	\begin{eqnarray*}
		&& \frac{1}{2}\frac{d}{dt}|u_0(t)-\hat{u}(t)|^2\leq-c_0|u_0(t)-\hat{u}(t)|^2\ \mbox{for almost all}\ t\in T, \\
		&\Rightarrow & \frac{d}{dt}\left[e^{2c_0t}|u_0(t)-\hat{u}(t)|^2\right]\leq0\ \mbox{for almost all}\ t\in T, \\
		&\Rightarrow & |u_0(t)-\hat{u}(t)|\leq e^{-2c_0t}|x_0-\hat{x}|\ \mbox{for all}\ t\in T.
	\end{eqnarray*}
	
	It follows that
	\begin{eqnarray*}
		&& |K(x_0)-K(\hat{x})|\leq e^{-2c_0b}|x_0-\hat{x}|, \\
		&\Rightarrow & K(\cdot)\ \mbox{is a contraction}.
	\end{eqnarray*}
	
	If hypotheses $H(A)'$ and $ H(\varphi)'$ hold, then
	\begin{eqnarray*}
		\frac{1}{2}\frac{d}{dt}|u_0(t)-\hat{u}(t)|^2 & \leq & -c_0||u_0(t)-\hat{u}(t)||^2 \\
		& \leq & -c_3|u_0(t)-\hat{u}(t)|^2\ \mbox{for almost all}\ t\in T\\
		& & \mbox{and some}\ c_3>0\ \mbox{(recall that $X\hookrightarrow H$)}
	\end{eqnarray*}
	and then continuing as above, we obtain
	\begin{equation*}
		|K(x_0)-K(\hat{x})|\leq e^{-2c_3b}|x_0-\hat{x}|.
	\end{equation*}
The proof is now complete.
\end{proof}

Given $h\in L^{p'}(T,H)$, we consider the following periodic problem:
\begin{equation}\label{eq19}
	\left\{
		\begin{array}{l}
			-u'(t)\in A(t,u(t)) + \partial\varphi(u(t)) + h(t)\ \mbox{for almost all}\ t\in T,\\
			u(0) = u(b).
		\end{array}
	\right\}
\end{equation}
\begin{prop}\label{prop5}
	If hypotheses $H(A),\ H(\varphi)$ or $H(A)',\ H(\varphi)'$ hold, then problem (\ref{eq19}) has a unique solution
	\begin{equation*}
		u_0\in W_p(T)\subseteq C(T,H)
	\end{equation*}
	and we have
	\begin{equation}\label{eq20}
		|u_0(t)|\leq \hat{c} + \int^t_0|h(s)|ds\ \mbox{for all}\ t\in T,\ \mbox{and some}\ \hat{c}>0.
	\end{equation}
\end{prop}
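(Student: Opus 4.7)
The plan is to obtain the periodic solution by applying the Banach fixed point theorem to the Poincar\'e map $K:H\to H$ introduced just before the statement. By Proposition \ref{prop4}, $K(\cdot)$ is a strict contraction on the complete metric space $(H,|\cdot|)$, so there is a unique $x^*\in H$ with $K(x^*)=x^*$. If $u_0\in W_p(T)$ denotes the Cauchy solution of (\ref{eq2}) with initial value $u_0(0)=x^*$ furnished by Proposition \ref{prop3}, then $u_0(b)=K(x^*)=x^*=u_0(0)$, so $u_0$ solves (\ref{eq19}). Conversely, the initial value of any periodic solution is a fixed point of $K$, so uniqueness of $x^*$ forces uniqueness of the periodic solution.

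For the a priori estimate (\ref{eq20}), I would act on the inclusion with $u_0(t)\in X$ and apply the integration by parts formula of Proposition \ref{prop1} to obtain
$$\tfrac{1}{2}\tfrac{d}{dt}|u_0(t)|^2+\langle A(t,u_0(t)),u_0(t)\rangle+(\xi(t),u_0(t))+(h(t),u_0(t))=0$$
for a suitable measurable selection $\xi\in S^{p'}_{\partial\varphi(u_0(\cdot))}$. Under $H(A),H(\varphi)$, the coercivity of $A(t,\cdot)$ contributes nonnegatively, while strong monotonicity of $\partial\varphi$ combined with boundedness of $\partial\varphi(0)$ yields $(\xi(t),u_0(t))\geq c_0|u_0(t)|^2-C|u_0(t)|$; under $H(A)',H(\varphi)'$ the dissipative term $c_0|u_0(t)|^2$ is supplied instead by strong monotonicity of $A(t,\cdot)$, transferred to the $H$-norm via the embedding $X\hookrightarrow H$. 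Either way one reaches an inequality of the form
$$\tfrac{d}{dt}|u_0(t)|^2+2c_0|u_0(t)|^2\leq C_1+C_2|u_0(t)|+2|h(t)||u_0(t)|.$$
Working with $\psi(t):=\sqrt{|u_0(t)|^2+1}\geq 1$ to avoid division by zero at points where $u_0$ vanishes, this reduces to $\psi'(t)\leq M+|h(t)|$, and integrating gives $|u_0(t)|\leq\psi(0)+Mb+\int_0^t|h(s)|\,ds$.

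The final and most delicate step is to bound $|u_0(0)|=|x^*|$ by a constant. Here I would exploit the contraction ratio $\alpha<1$ of $K$ produced in Proposition \ref{prop4}: from $x^*=K(x^*)$ and $|K(x^*)|\leq|K(0)|+\alpha|x^*|$ one infers $|x^*|\leq(1-\alpha)^{-1}|K(0)|$. The quantity $|K(0)|=|u_1(b)|$, with $u_1$ the Cauchy solution starting at $0$, is in turn controlled by the very same Gronwall-type estimate applied with zero initial data and depends only on the problem data and $\|h\|_{L^1(T,H)}$. Absorbing all resulting constants into a single $\hat c>0$ yields (\ref{eq20}). The main technical obstacle will be tracking the constants cleanly through the two alternative hypothesis sets, since the dissipativity originates in $\partial\varphi$ under $H(\varphi)$ but in $A$ under $H(A)'$, and the conversion from the $X$-norm to the $H$-norm in the second case must be done with a little care so that the exponent $2$ matches.
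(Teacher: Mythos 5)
Your proposal is correct and follows essentially the same route as the paper: Banach's fixed point theorem applied to the Poincar\'e map of Proposition \ref{prop4} for existence and uniqueness, then an energy estimate obtained by acting with $u_0(t)$ and using Proposition \ref{prop1}, with the bound on $|u_0(0)|$ closed by exploiting the dissipativity together with periodicity (the paper plugs $t=b$ into the decay estimate $|u_0(b)|\leq e^{-c_4 b}|u_0(0)|+\cdots$ and uses $u_0(0)=u_0(b)$, which is the same computation as your $|x^*|\leq(1-\alpha)^{-1}|K(0)|$). Your substitution $\psi(t)=\sqrt{|u_0(t)|^2+1}$ is a slightly more careful version of the paper's division by $|u_0(t)|$, but the argument is otherwise identical.
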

\begin{proof}
	By Proposition \ref{prop4}, we know that for both cases the Poincar\'e map $K:H\rightarrow H$ is a contraction. So, the Banach fixed point theorem guarantees the existence of a unique $x_0\in H$ such that
	\begin{equation}\label{eq21}
		K(x_0)=x_0.
	\end{equation}
	
	Let $u_0\in W_p(T)\subseteq C(T,H)$ be the unique solution of (\ref{eq2}) with $u_0(0)=x_0\in H$.	
	From (\ref{eq21}) it follows that this is the unique solution of (\ref{eq21}).
	
	Next, we establish the uniform bound in (\ref{eq20}). We have
	\begin{equation*}
		\left\{
			\begin{array}{l}
				-u'(t)\in A(t,u_0(t)) + \partial\varphi(u_0(t)) + h(t)\ \mbox{for almost all}\ t\in T,\\
				u_0(0) = u_0(b).
			\end{array}
		\right\}
	\end{equation*}
	
	The proof is common for both cases. We act with $u_0(t)$ and use Proposition \ref{prop1}. Then for some $g_0\in S^{p'}_{\partial\varphi(u_0(\cdot))}$ and for all $\eta\in\partial\varphi(0)$, we have
	\begin{eqnarray}
		&\frac{1}{2}\frac{d}{dt}  |u_0(t)|^2 &\leq-c_4|u_0(t)|^2  -(g_0(t)-\eta,u_0(t)) - (\eta,u_0(t))-(h(t),u_0(t)) \nonumber\\
		& & \mbox{for almost all}\ t\in T,\ \mbox{and some}\ c_4>0\ \mbox{(see hypothesis $H(A)(ii)$)} \nonumber\\
		& & \leq -c_4|u_0(t)|^2 + [|\partial\varphi(0)|+|h(t)|]|u_0(t)|\ \mbox{for almost all}\ t\in T \nonumber\\
		& & \mbox{(since $\partial\varphi(\cdot)$ is monotone)} \nonumber\\
		\Rightarrow & |u_0(t)|\frac{d}{dt}|u_0(t)| & \leq-c_4|u_0(t)|^2+[|\partial\varphi(0)|+|h(t)|]|u_0(t)|\ \mbox{for almost all}\ t\in T, \nonumber\\
		\Rightarrow & \frac{d}{dt}|u_0(t)| & \leq-c_4|u_0(t)| + [|\partial\varphi(0)|+|h(t)|]\ \mbox{for almost all}\ t\in T, \nonumber\\
		\Rightarrow & \frac{d}{dt}[e^{c_4t}|u_0(t)|] & \leq e^{c_4t}[|\partial\varphi(0)|+|h(t)|] \nonumber\\
		\Rightarrow & |u_0(t)|\leq e^{-c_4t}|u_0(0)| & +\ e^{-c_4t}\int^t_0 e^{c_4s}[|\partial\varphi(0)|+|h(t)|]ds \nonumber\\
		& & \leq e^{-c_4t}|u_0(0)|+|\partial\varphi(0)|b+\int^t_0|h(s)|ds\ \mbox{for all}\ t\in T. \label{eq22}
	\end{eqnarray}
	
	If $t=b$, then using the periodic boundary condition, we have
	\begin{eqnarray}
		& \frac{e^{c_4b-1}}{e^{c_4b}}\,|u_0(0)|\leq |\partial\varphi(0)|b+||h||_{L^1(T,H)}, \nonumber\\
		\Rightarrow & |u_0(0)|\leq\frac{e^{c_4b}}{e^{c_4b-1}}\left[|\partial\varphi(0)|b+||h||_{L^1(T,H)}|\right]. \label{eq23}
	\end{eqnarray}
	
	We return to (\ref{eq22}) and use (\ref{eq23}). Then
	\begin{eqnarray*}
		& |u_0(t)|\leq\frac{e^{c_4b}}{e^{c_4b}-1}\left[|\partial\varphi(0)|b+||h||_{L^1(T,H)}\right]+|\partial\varphi(0)|b+\int^t_0|h(s)|ds \\
		\Rightarrow & |u_0(t)|\leq\hat{c}+\int^t_0|h(s)|ds\ \mbox{for all}\ t\in T,\ \mbox{and some}\ \hat{c}>0.
	\end{eqnarray*}
The proof is now complete.
\end{proof}
Let $M>0$ be as in hypothesis $H(F)(iii)$ and let $p_M:H\rightarrow H$ be the $M$-radial retraction defined by
\begin{equation*}
	p_M(x)=\left\{
     \begin{array}{ll}
       x & \mbox{if}\ |x|\leq M\\
       \frac{Mx}{|x|} & \mbox{if}\ M<|x| \\
     \end{array}
   \right. \mbox{for all}\ x\in H.
\end{equation*}

We set $\hat{F}(t,x)=F(t,p_M(x))$ for all $(t,x)\in T\times H$. Clearly, $\hat{F}(t,x)$ satisfies hypotheses $H(F),(i),(ii)$ and
\begin{eqnarray*}
	&&|\hat{F}(t,x)|\leq a_M(t)\ \mbox{for almost all}\ t\in T,\ \mbox{and all}\ x\in H,\ \mbox{where}\ a_M\in L^{p'}(T)\\
	&&\mbox{(see hypothesis $H(F),(iii)$)}.
\end{eqnarray*}

In what follows, we denote by $\hat{S}\subseteq W_p(T)$ the solution set of (\ref{eq1}) with $F$ replaced by $\hat{F}$, and by $S\subseteq W_p(T)$ the solution set of (\ref{eq1}) with the original $F$.

Also, we will need the following extra condition on $\varphi$.

\smallskip
$H_0$: For all $(u,h)\in {\rm Gr}\,\partial\varphi$, we have $0\leq(h,u)$.
\begin{remark}
	Evidently, this condition is satisfied if $0\in\partial\varphi(0)$ (hence $0$ is a minimizer of $\varphi$).
\end{remark}
\begin{prop}\label{prop6}
	\begin{itemize}
		\item [(a)] If hypotheses $H(A),\ H(\varphi)$ or $H(A'),\ H(\varphi)'$ and $H(F)_1,\ H_0$ hold, then $|u(t)|\leq M$ for all $t\in T$,  $u\in\hat{S}$.
		\item [(b)] If hypotheses $H(A),\ H(\varphi)$ or $H(A)',\ H(\varphi)'$ and $H(F)'_1$ hold, then there exists $M>0$ such that $|u(t)|\leq M$ for all $t\in T$, $u\in S$.
	\end{itemize}
\end{prop}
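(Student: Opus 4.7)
The plan is to derive an energy identity by acting on the inclusion with $u(t)$ and applying the integration-by-parts formula (Proposition \ref{prop1}), and then to exploit the periodic boundary condition to convert pointwise inequalities into uniform bounds.

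For part (a), let $u\in\hat{S}$ with selections $g(t)\in\partial\varphi(u(t))$ and $f(t)\in\hat{F}(t,u(t))=F(t,p_M(u(t)))$. Applying Proposition \ref{prop1} with $v=u$ yields
$$\frac{1}{2}\frac{d}{dt}|u(t)|^2 = -\langle A(t,u(t)),u(t)\rangle - (g(t),u(t)) - (f(t),u(t))\ \mbox{for a.a.}\ t\in T.$$
The first term is $\le 0$ by the coercivity in $H(A)(ii)$; the second by hypothesis $H_0$. For the third, whenever $|u(t)|>M$ we have $p_M(u(t))=M u(t)/|u(t)|$, and hence
$$(f(t),u(t)) = \frac{|u(t)|}{M}(f(t),p_M(u(t))) \ge 0$$
by the Hartman-type sign condition in $H(F)_1(iii)$, since $|p_M(u(t))|=M$. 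Therefore $\frac{d}{dt}|u(t)|^2\le 0$ wherever $|u(t)|>M$, and the absolutely continuous function $\psi(t)=(|u(t)|^2-M^2)^+$ is non-increasing on $T$. Periodicity $\psi(0)=\psi(b)$ then forces $\psi\equiv C$ for some $C\ge 0$. If $C>0$, then $|u(t)|>M$ on all of $T$ and $\frac{d}{dt}|u(t)|^2\equiv 0$, so each of the three nonnegative terms above must vanish identically; in particular $\langle A(t,u(t)),u(t)\rangle =0$, combined with $c_0\|u(t)\|^p\le\langle A(t,u(t)),u(t)\rangle$, yields $\|u(t)\|=0$ a.a.\ $t$, hence $u\equiv 0$ in $C(T,H)$, contradicting $|u(t)|>M$. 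So $C=0$ and $|u(t)|\le M$ on $T$.

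For part (b), the same multiplication by $u(t)$, combined either with the strong monotonicity of $\partial\varphi$ in the $H$-norm (from $H(\varphi)$), or with the strong monotonicity of $A$ in $H(A)'(ii)$ transferred to the $H$-norm via the embedding $X\hookrightarrow H$, yields, exactly as in the derivation of (\ref{eq22}), a differential inequality
$$\frac{d}{dt}|u(t)| \le -c|u(t)| + |\partial\varphi(0)| + |f(t)|\ \mbox{for a.a.}\ t\in T,$$
with $|f(t)|\le k(t)(1+|u(t)|)$ by $H(F)'_1(iii)$. Note that $f\in L^{p'}(T,H)$ for each fixed $u\in W_p(T)\hookrightarrow C(T,H)$, so $u$ solves a periodic problem of type (\ref{eq19}). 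Repeating the integrating-factor argument of Proposition \ref{prop5} with the refined factor $e^{ct-K(t)}$, where $K(t)=\int_0^t k(s)\,ds\le\|k\|_{L^1(T)}$, and using periodicity $|u(b)|=|u(0)|$ as in (\ref{eq23}), one obtains an estimate of the form $|u(0)|\le C_0+C_1\|u\|_\infty$, where $C_0,C_1$ depend only on $c,b,|\partial\varphi(0)|,\|k\|_{L^{p'}(T)}$; inserting this into the pointwise bound $|u(t)|\le\hat c+\int_0^t|f(s)|\,ds$ (with $\hat c$ itself affine in $\|f\|_1\le\|k\|_{L^1(T)}(1+\|u\|_\infty)$), and applying Gronwall's inequality with the $L^1$-kernel $k$, gives an inequality $\|u\|_\infty\le C_2+C_3\|u\|_\infty$ which, once the exponential damping $e^{-cb}<1$ in the periodicity step is exploited to make $C_3<1$, can be solved to produce the desired constant $M$.

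The principal obstacle is precisely the self-referential structure of the estimate in part (b): the intermediate constants coming from Proposition \ref{prop5} depend on $\|f\|_{L^1(T,H)}$, which depends on $\|u\|_\infty$ through the linear growth of $F$. Closing the loop requires that one does \emph{not} apply Proposition \ref{prop5} as a black box, but rather re-enters its proof and keeps track of how the periodicity-induced exponential damping $e^{-cb}$ multiplies the coupling between $|u(0)|$ and $\|u\|_\infty$; this damping is what allows the $\|u\|_\infty$-terms generated by the growth of $F$ to be absorbed on the left. Part (a), by contrast, presents no such obstacle because the Hartman-type sign condition $H(F)_1(iii)$ combined with $H_0$ produces genuine dissipation at the threshold $|u|=M$, and the periodic maximum-principle argument via $\psi$ then upgrades this threshold information directly to a uniform bound without any smallness assumption.
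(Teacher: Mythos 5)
Your part (a) is correct and is in substance the paper's own argument: act on the inclusion with $u(t)$, discard the $A$-term by the coercivity in $H(A)(ii)$, the $\partial\varphi$-term by $H_0$, and the perturbation term by the Hartman sign condition transported through the radial retraction $p_M$, then invoke periodicity. The paper organizes this as a two-case contradiction ($|u|>M$ on all of $T$, or a crossing pair $\tau<r$), while you package it through the auxiliary function $\psi=(|u|^2-M^2)^+$; your version is, if anything, the cleaner one, since the paper's second case tacitly requires $|u|\ge M$ on all of $[\tau,r]$, which your formulation avoids having to arrange.

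Part (b) is where the genuine problem lies. The paper's proof is two lines: quote the bound (\ref{eq20}) of Proposition \ref{prop5}, $|u(t)|\le\hat c+\int_0^t|h(s)|\,ds$, insert $|h(s)|\le k(s)(1+|u(s)|)$, and apply Gronwall. You have correctly observed that this is circular as written, because the $\hat c$ produced in the proof of Proposition \ref{prop5} contains the term $\frac{e^{c_4b}}{e^{c_4b}-1}||h||_{L^1(T,H)}$ and therefore depends on $||u||_{C(T,H)}$ through the linear growth of $F$. But your proposed repair does not close the loop: the decisive claim that the periodicity-induced damping forces $C_3<1$ is asserted, not derived, and it cannot be derived from the stated hypotheses. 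If you actually run your refined integrating factor $e^{ct-K(t)}$, $K(t)=\int_0^tk(s)\,ds$, around one period and use $|u(b)|=|u(0)|$, the coefficient multiplying $|u(0)|$ is $e^{K(b)-cb}$, so absorption requires the smallness condition $||k||_{L^1(T)}<cb$, which relates the growth rate of $F$ to the dissipation constant and appears nowhere in $H(F)'_1$. Without it your final inequality has $C_3\ge1$ and yields nothing. Indeed, no argument can succeed in this generality: take $X=H=\RR$, $p=2$, $A(t,x)=x$, $\varphi\equiv0$ and $F(t,x)=\{(a(t)-1)x\}$ with $a$ continuous and $\int_0^ba(s)\,ds=0$; all hypotheses of part (b) are satisfied, yet the periodic solution set is the unbounded line $\{c\,e^{-\int_0^ta(s)\,ds}:c\in\RR\}$. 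So the circularity you flagged is a real defect, your absorption step is the point at which your proof fails, and the paper's own proof of (b) conceals the same defect by treating $\hat c$ in (\ref{eq20}) as a constant independent of $h$.
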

\begin{proof}
	(a) Suppose that the conclusion of this part is not true. Then for some $u\in \hat{S}$ one of the following assertions holds.
	\begin{itemize}
		\item $|u(t)|>M$ for all $t\in T$.
		\item There exist $\tau,r\in T$ with $\tau<r$ such that $|u(\tau)|=M$ and $|u(r)|>M$.
	\end{itemize}
	We have
	\begin{equation*}
		-u'(t)\in A(t,u(t)) + \partial\varphi(u(t)) + \hat{h}(t)\ \mbox{a.e on}\ T,\ u(0)=u(b)
	\end{equation*}
	with $\hat{h}\in S^{p'}_{\hat{F}(\cdot,u(\cdot))}$. As before, using Proposition \ref{prop1} and hypotheses $H(A)(iii)$ and $ H_0$ we have
	\begin{eqnarray*}
		& |u(t)|^2 + c_0\int^t_0||u(s)||^2ds & \leq |u(0)|^2-\int^t_0(\hat{h}(s),u(s))ds \\
		& & =|u(0)|^2-\int^t_0\frac{u(s)}{M}(\hat{h}(s),p_M(u(s)))ds \\\
		& & \leq |u(0)|^2\ \mbox{(see hypothesis $H(F)_1(iii)$)}, \\
		\Rightarrow & |u(b)|^2 < |u(0)|^2,\ \mbox{a contradiction}. &
	\end{eqnarray*}

	If the second case holds, then repeating the above argument on the interval $[\tau,r]$, we obtain
	\begin{equation*}
		|u(r)|^2<|u(\tau)|^2,\ \mbox{again a contradiction}.
	\end{equation*}

	Therefore we conclude that
	\begin{equation*}
		|u(t)|\leq M\ \mbox{for all}\ t\in T,\  u\in\hat{S}.
	\end{equation*}

	(b) Let $u\in S\subseteq W_p(T)$. Then we have
	\begin{eqnarray*}
		& - & u'(t)\in A(t,u(t)) + \partial\varphi(u(t)) + h(t)\ \mbox{for almost all}\ t\in T, \\
		& & u(0) = u(b),
	\end{eqnarray*}
	with $h\in S^{p'}_{F(\cdot,u(\cdot))}$. Then form (\ref{eq20}) of Proposition \ref{prop5}, we have
	\begin{eqnarray*}
		& |u(t)| & \leq \hat{c} + \int^t_0|h(s)|ds \\
		& & \leq \hat{c} + \int^t_0 k(s)[1+|u(s)|]ds\ \mbox{(see hypothesis $H(F)_1'(iii)$)} \\
		\Rightarrow & |u(t)| \leq& M\ \mbox{for some}\ M>0,\ \mbox{and all}\ t\in T,\ u\in S\ \mbox{(use Gronwall's inequality)}.
	\end{eqnarray*}
This completes the proof.
\end{proof}
On account of Proposition \ref{prop6}, we see that we can replace $F(t,x)$ by
\begin{equation*}
	\hat{F}(t,x) = F(t,p_M(x))\ \mbox{for all}\ (t,x)\in T\times H.
\end{equation*}

As we have already mentioned, $\hat{F}$ preserves the properties of $F$. More precisely, we have:
\begin{itemize}
	\item For all $x\in H,\ t\mapsto\hat{F}(t,x)$ is graph measurable.
	\item For almost all $t\in T,\ {\rm Gr}\,\hat{F}(t,\cdot)\subseteq H\times H_w$ is sequentially closed.
\end{itemize}

Moreover, we have
\begin{equation*}
	|\hat{F}(t,x)|\leq\hat{\eta}(t)\ \mbox{for almost all}\ t\in T,\ \mbox{and all}\ x\in H
\end{equation*}
with $\hat{\eta}\in L^{p'}(T)$ ($\hat{\eta}=a_M$ if $H(F)_1,H_0$ hold and $\hat{\eta}=(1+M)k$ if $H(F)_2$ holds).

Let $\xi:L^{p'}(T)\rightarrow C(T,H)$ be the solution map for problem (\ref{eq19}). So, for every $h\in L^{p'}(T,H)$, $\xi(h)\in W_p(T)\subseteq C(T,H)$ is the unique solution of problem (\ref{eq19}) (see Proposition \ref{prop5}).
\begin{prop}\label{prop7}
	If hypotheses $H(A),\ H(\varphi)$ or $H(A)',\ H(\varphi)'$ hold, then $\xi:L^{p'}(T,H)\rightarrow C(T,H)$ is completely continuous (that is, if $h_n\xrightarrow{w}h$ in $L^{p'}(T,H)$, then $\xi(h_n)\rightarrow\xi(h)$ in $C(T,H)$).
\end{prop}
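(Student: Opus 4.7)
The plan is to set $u_n=\xi(h_n)\in W_p(T)$, extract a weakly convergent subsequence in $W_p(T)$, identify its limit as $\xi(h)$ via the $L$-pseudomonotonicity machinery of Proposition \ref{prop3}, and then promote this subsequential weak convergence to strong $C(T,H)$-convergence of the full sequence by means of an energy estimate on $u_n-u$ that exploits the periodic boundary condition.

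Uniform bounds in $W_p(T)$ follow from norm-boundedness of $\{h_n\}\subseteq L^{p'}(T,H)$ (weak convergence implies boundedness) by the usual test: pair the equation for $u_n$ with $u_n(t)$, integrate using Proposition \ref{prop1}, apply the coercivity in $H(A)(ii)$ (or $H(A)'(ii)$), the monotonicity bound $(g_n,u_n)\geq -|\partial\varphi(0)||u_n|$ for selections $g_n\in\partial\Phi(u_n)$, and Young's inequality on the $h_n$-term; the growth hypothesis $H(A)(iii)$ then controls $u_n'$ in $L^{p'}(T,X^*)$. Passing to a subsequence, $u_n\xrightarrow{w}u$ in $W_p(T)$, $u_n\to u$ in $L^p(T,H)$ by the compact embedding, $g_n\xrightarrow{w}g$ in $L^{p'}(T,H)$, and $u(0)=u(b)$ because the endpoint evaluations are continuous on $W_p(T)\hookrightarrow C(T,H)$. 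To identify $u$ with $\xi(h)$ I would mirror the argument of Proposition \ref{prop3}: from the equation, $((a(u_n)+g_n,u_n-u))=-((u_n',u_n-u))-\int_0^b(h_n,u_n-u)\,dt$; the second term vanishes in the limit by the weak-strong pairing $h_n\xrightarrow{w}h$ in $L^{p'}(T,H)$ and $u_n\to u$ in $L^p(T,H)$, while the first is handled via Proposition \ref{prop1} and the periodic condition $u_n(0)=u_n(b)$. Then $L$-pseudomonotonicity of $a+\partial\Phi$ yields the equation for $u$, and uniqueness (Proposition \ref{prop5}) forces $u=\xi(h)$.

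For strong convergence, subtract the equations for $u_n$ and $u$, pair with $u_n(t)-u(t)\in X$, and apply Proposition \ref{prop1} together with monotonicity of $A(t,\cdot)$ and the strong-monotonicity estimate in $H(\varphi)$---or, under $H(A)',H(\varphi)'$, the strong monotonicity of $A(t,\cdot)$ combined with $X\hookrightarrow H$---to obtain, for some $c>0$,
\begin{equation*}
\tfrac{1}{2}\tfrac{d}{dt}|u_n(t)-u(t)|^2 + c|u_n(t)-u(t)|^2 \le -(h_n(t)-h(t),u_n(t)-u(t)) \quad\text{a.e. on }T.
\end{equation*}
Integrating over $[0,b]$ and cancelling the boundary term via $u_n(0)-u(0)=u_n(b)-u(b)$ gives $c\int_0^b|u_n-u|^2\,dt\le -\int_0^b(h_n-h,u_n-u)\,dt\to 0$, so $u_n\to u$ in $L^2(T,H)$. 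Pick $t_n\in T$ with $|u_n(t_n)-u(t_n)|^2\le b^{-1}\|u_n-u\|_{L^2(T,H)}^2\to 0$. For $t\ge t_n$, integrating the inequality from $t_n$ to $t$ bounds $|u_n(t)-u(t)|^2$ by $|u_n(t_n)-u(t_n)|^2+2\|h_n-h\|_{L^{p'}(T,H)}\|u_n-u\|_{L^p(T,H)}$; for $t<t_n$, I first integrate from $t_n$ to $b$ to bound $|u_n(b)-u(b)|^2=|u_n(0)-u(0)|^2$, then from $0$ to $t$, picking up a similar $o(1)$ bound. Hence $u_n\to u$ in $C(T,H)$ along the subsequence, and uniqueness of $\xi(h)$ via a subsequence-of-every-subsequence argument promotes this to the whole sequence.

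The delicate step is the identification $u=\xi(h)$: verifying $\limsup_{n\to\infty}((a(u_n)+g_n,u_n-u))\le 0$ for $L$-pseudomonotonicity requires isolating $((u_n',u_n-u))$ via integration by parts and making essential use of the periodic boundary condition, while the weak-strong pairing with $h_n$ must be handled in concert with the compact embedding $W_p(T)\hookrightarrow L^p(T,H)$ in order to close the estimate.
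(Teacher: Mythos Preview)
Your argument is correct, but it takes a different route from the paper in the identification step. You first pass to the limit in the equation via $L$-pseudomonotonicity of $a+\partial\Phi$ (exactly as in Proposition~\ref{prop3}) and then invoke the uniqueness from Proposition~\ref{prop5} to conclude that the weak $W_p(T)$-limit coincides with $\xi(h)$; only afterwards do you use the energy inequality for $u_n-\xi(h)$ to upgrade to $C(T,H)$-convergence. The paper bypasses the $L$-pseudomonotone machinery entirely: it writes the energy inequality for $u_n-\xi(h)$ on intervals $[\epsilon_m,t]$ with $\epsilon_m\to 0$ chosen so that $u_n(\epsilon_m)\to\hat u(\epsilon_m)$ strongly (from $L^p(T,H)$-convergence), passes to the limit using weak lower semicontinuity of the $H$-norm, and then lets $m\to\infty$ and sets $t=b$; the periodic boundary condition on both $\hat u$ and $\xi(h)$ forces $\hat u=\xi(h)$ \emph{and} delivers strong $C(T,H)$-convergence in one stroke. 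Your $t_n$-trick plays the same role as the paper's $\epsilon_m$-trick once the identification is done. What your approach buys is conceptual separation (identification via abstract monotone machinery, convergence via energy); what the paper's approach buys is economy, since it never passes to the limit in the PDE.

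One point you should make explicit: to extract a weakly convergent subsequence from $\{g_n\}\subseteq L^{p'}(T,H)$ and to bound $u_n'$ in $L^{p'}(T,X^*)$, you need $\{g_n\}$ bounded. This follows from the hypothesis that $\varphi$ is bounded above on bounded sets (hence $\partial\varphi$ maps bounded sets in $H$ to bounded sets), together with the uniform bound $|u_n(t)|\leq M$ supplied by estimate~(\ref{eq20}) of Proposition~\ref{prop5} and the norm-boundedness of $\{h_n\}$. The paper states this step explicitly; your outline skips over it.
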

\begin{proof}
	Suppose that $h_n\xrightarrow{w}h$ in $L^{p'}(T,H)$ and let $u_n=\xi(h_n)$ for all $n\in\NN, u=\xi(h)$. Then there exist $\{g_n,g\}_{n\geq1}\subseteq L^{p'}(T,H)$ such that
	\begin{eqnarray}\label{eq24}
		&&g_n(t)\in\partial\varphi(u_n(t))\ \mbox{for almost all}\ t\in T,\ \mbox{and all}\ n\in\NN,\ \mbox{and}\ g(t)\in\partial\varphi(u(t))\nonumber\\
		&& \mbox{for almost all}\ t\in T.
	\end{eqnarray}

	We have
	\begin{eqnarray}
		u'_n(t) + A(t,u_n(t)) + g_n(t) + h_n(t) = 0,\ \mbox{for almost all}\ t\in T,\ \mbox{and all}\ n\in\NN,\nonumber\\  u_n(0)=u_n(b). \label{eq25}\\
		u'(t) + A(t, u(t)) + g(t) + h(t) = 0\ \mbox{for almost all}\ t\in T,\ u(0)=u(b). \label{eq26}
	\end{eqnarray}

	On (\ref{eq25}) we act with $u_n(t)$. Using Proposition \ref{prop1} (the integration by parts formula) and hypothesis $H(A)(ii)$ or $H(A)'(ii)$ we obtain
	\begin{equation*}
		\frac{1}{2}\frac{d}{dt}|u_n(t)|^2 + c_0||u_n(t)||^p + (g_n(t),u_n(t)) + (h_n(t),u_n(t)) = 0\ \mbox{for almost all}\ t\in T,
	\end{equation*}

	Integrating over $T$ and using the periodic boundary condition, we have
	\begin{eqnarray*}
		c_0||u_n||^p_{L^p(T,X)} + \int^b_0(g_n(t) - v^*,u_n(t))dt + \int^b_0(v^*,u_n(t))dt \leq c_5||u_n||_{L^p(T,X)} \\
		\mbox{for some}\ c_5>0,\ \mbox{and all}\ n\in\NN\ \mbox{with}\ v^*\in\partial\varphi(0).
	\end{eqnarray*}

	Hypothesis $H(\varphi)$ or $H(\varphi)'$ implies that
	\begin{eqnarray}
		& & c_0||u_n||^p_{L^p(T,X)}\leq c_6||u_n||_{L^p(T,X)}\ \mbox{for some}\ c_6>0,\ \mbox{and all}\ n\in\NN, \nonumber\\
		& \Rightarrow & \{u_n\}_{n\geq1}\subseteq L^p(T,X)\ \mbox{is bounded}. \label{eq27}
	\end{eqnarray}

	Recall that $\partial\varphi(\cdot)$ is bounded (see hypothesis $H(\varphi)$ and \cite{4}). Hence, if $M>0$ is as in Proposition \ref{prop6} and $\overline{B}_M=\{x\in H:|x|\leq M\}$, then
	\begin{equation*}
		\partial\varphi(\overline{B}_M)\subseteq H\ \mbox{is bounded}.
	\end{equation*}

	So, we can find $M_1>0$ such that
	\begin{equation}\label{eq28}
		|\partial\varphi(u_n(t))|\leq M_1\ \mbox{for all}\ n\in\NN,\ t\in T.
	\end{equation}

	From (\ref{eq25}), (\ref{eq27}), (\ref{eq28}) and hypothesis $H(A)(iii)$ it follows that
	\begin{equation}\label{eq29}
		||u'_n||_{L^{p'}(T,X^*)}\leq M_2\ \mbox{for some}\ M_2>0,\ \mbox{and all}\ n\in\NN.
	\end{equation}

	From (\ref{eq27}) and (\ref{eq29}) we infer that
	\begin{equation}\label{eq30}
		\{u_n\}_{n\geq1}\subseteq W_p(T)\ \mbox{is bounded}.
	\end{equation}

	So by passing to a suitable subsequence if necessary, we may assume that
	\begin{eqnarray}
		&&u_n\rightarrow\hat{u}\ \mbox{in}\ L^p(T,H)\ \mbox{and}\ u_n\xrightarrow{w}\hat{u}\ \mbox{in}\ C(T,H) \label{eq31} \\
		&&\mbox{(recall that $W_p(T)\hookrightarrow L^p(T,H)$ compactly, $W_p(T)\hookrightarrow C(T,H)$ and see (\ref{eq30}))}. \nonumber
	\end{eqnarray}

	Let $\epsilon_m\rightarrow0^+$ be such that for all $m\in\NN,\ u_n(\epsilon_m)\rightarrow\hat{u}(\epsilon_m)$ in $H$ as $n\rightarrow\infty$ (see the first convergence in (\ref{eq31})). As before, using (\ref{eq25}), (\ref{eq26}) and Proposition \ref{prop1} (the integration by parts formula), we have for all $n,\, m\in\NN$ and all $t\in[\epsilon_m,b]$
	\begin{eqnarray}
		|u_n(t)-u(t)|^2 & \leq |u_n(\epsilon_m)-u(\epsilon_m)|^2 + \int^t_{\epsilon_m}\langle A(s,u_n(s)) - A(s,u(s)), u(s)-u_n(s)\rangle ds \nonumber \\
		& +\int^t_{\epsilon_m}(g_n(s)-g(s),u(s)-u_n(s)) ds + \int^t_{\epsilon_m}(h_n(s)-h(s),u(s)-u_n(s))ds \nonumber\\
		& \leq |u_n(\epsilon_m)-u(\epsilon_m)|^2 - c_7\int^t_{\epsilon_m}|u_n(s)-u(s)|^2ds + \label{eq32}\\
		& \int^t_{\epsilon_m}(h_n(s)-h(s),u(s)-u_n(s))ds\ \mbox{for some}\ c_7>0,\ \mbox{and all}\ n\in\NN. \nonumber
	\end{eqnarray}

	To derive (\ref{eq32}) if $H(A)$ and $ H(\varphi)$ hold, we have used $H(A)(ii)$, the monotonicity of $\partial\varphi(\cdot)$ and the fact that $X\hookrightarrow H$, while if $H(A)',\ H(\varphi)'$ hold, we have used the strong monotonicity of $\partial\varphi(\cdot)$ and the monotonicity of $A(t,\cdot)$.

	In (\ref{eq32}) we pass to the limit as $n\rightarrow\infty$ and obtain
	\begin{eqnarray*}
		|\hat{u}(t)-u(t)|^2 \leq |\hat{u}(\epsilon_m)-u(\epsilon_m)|^2 - c_7\int^t_{\epsilon_m}|\hat{u}(s)-u(s)|^2 ds\ \mbox{for all}\ m\in\NN \\
		\mbox{(see (\ref{eq31}) and recall that $u_n(\epsilon_m)\rightarrow\hat{u}(\epsilon_m)$ in $H$ for all $m\in\NN$ and $2\leq p$)}.
	\end{eqnarray*}
	
	Finally, letting $m\rightarrow\infty$ we get
	\begin{equation*}
		|\hat{u}(t)-u(t)|^2\leq|\hat{u}(0)-u(0)|^2 - c_7\int^t_0|\hat{u}(s)-u(s)|^2 ds\ \mbox{for all}\ t\in T.
	\end{equation*}
	
	Choosing $t=b$ and recalling that $\hat{u}(0)=\hat{u}(b), u(0)=u(b)$, we have
\begin{eqnarray*}
	&&0\leq -c_7\int^b_0|\hat{u}(s)-u(s)|^2ds\leq 0\\
	&\Rightarrow&\hat{u}=u.
\end{eqnarray*}

It follows from (\ref{eq32}) that
$$||u_n-u||_{C(T,H)}\rightarrow 0\ \mbox{as}\ n\rightarrow\infty.$$

Hence for the original sequence we have
\begin{eqnarray*}
	&&u_n=\xi(h_n)\rightarrow\xi(h)=u\ \mbox{in}\ C(T,H)\ \mbox{as}\ n\rightarrow\infty,\\
	&\Rightarrow&\xi:L^{p'}(T,H)\rightarrow C(T,H)\ \mbox{is completely continuous.}
\end{eqnarray*}
The proof is complete.
\end{proof}

As we have already indicated by replacing $F$ by $\hat{F}$ if necessary, we may assume that
$$|F(t,x)|\leq\hat{\eta}(t)\ \mbox{for almost all}\ t\in T,\ \mbox{and all}\ x\in H\ \mbox{with}\ \hat{\eta}\in L^{p'}(T).$$

Based on this, we introduce the following set
$$W=\{h\in L^{p'}(T,H):|h(t)|\leq\hat{\eta}(t)\ \mbox{for almost all}\ t\in T\}.$$

From the Eberlein-Smulian theorem we know that $W\subseteq L^{p'}(T,H)$ is sequentially weakly compact. Therefore, using Proposition \ref{prop7}, we conclude that
\begin{equation}\label{eq33}
	E=\xi(W)\subseteq C(T,H)\ \mbox{is compact.}
\end{equation}

Now we are ready for our first existence theorem for the ``convex" problem (that is, the multivalued perturbation $F(t,x)$ is convex-valued).
\begin{theorem}\label{th8}
	If hypotheses $H(A),\ H(\varphi)$ or $H(A)',H(\varphi)'$ and $H(F)_1,\ H_0$ or $H(F)_1'$ hold, then problem (\ref{eq1}) admits a solution $\hat{u}\in W_p(T)$.
\end{theorem}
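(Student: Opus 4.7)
The plan is to recast (\ref{eq1}) as a fixed-point problem for a multifunction on a weakly compact set and then apply the Kakutani--Ky Fan fixed point theorem. Thanks to Proposition \ref{prop6} and the $M$-radial retraction, I may assume throughout the proof that $|F(t,x)|\le\hat\eta(t)$ for almost all $t\in T$ and all $x\in H$, with $\hat\eta\in L^{p'}(T)$. With $W=\{h\in L^{p'}(T,H):|h(t)|\le\hat\eta(t)\text{ a.e.}\}$ as introduced above, Proposition \ref{prop7} supplies a completely continuous solution map $\xi\colon W\to C(T,H)$, and $E=\xi(W)$ is compact in $C(T,H)$ by (\ref{eq33}).

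I would then introduce the multifunction
\[
    N\colon W\to 2^{W},\qquad N(h)=S^{p'}_{F(\cdot,\,\xi(h)(\cdot))},
\]
so that any fixed point $h_*\in N(h_*)$ immediately yields a solution $\hat u=\xi(h_*)\in W_p(T)$ of (\ref{eq1}): indeed $-\hat u'(t)\in A(t,\hat u(t))+\partial\varphi(\hat u(t))+h_*(t)$ a.e., with $h_*(t)\in F(t,\hat u(t))$ a.e.\ and $\hat u(0)=\hat u(b)$. To invoke the Kakutani--Ky Fan theorem in $L^{p'}(T,H)$ equipped with the weak topology, one verifies: (a) $W$ is nonempty, convex and weakly compact (Eberlein--Smulian); (b) $N(h)$ is nonempty, convex and weakly compact for each $h\in W$---nonemptiness from graph-measurability of $t\mapsto F(t,\xi(h)(t))$ combined with the Yankov--von Neumann--Aumann selection theorem (as in the Remark after $H(F)_1$), convexity from $F$ being convex-valued, weak compactness from the integrable envelope $\hat\eta$; (c) $N$ has closed graph in $W\times W$ for the weak topology, which on the weakly compact set $W$ is metrizable by separability of $L^{p'}(T,H)$, so sequential closedness suffices.

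The decisive step is (c). Given $h_n\xrightarrow{w}h$ in $L^{p'}(T,H)$ and $v_n\in N(h_n)$ with $v_n\xrightarrow{w}v$, Proposition \ref{prop7} forces $\xi(h_n)\to\xi(h)$ in $C(T,H)$, hence $\xi(h_n)(t)\to\xi(h)(t)$ in $H$ for every $t\in T$. Combining this strong pointwise convergence of the states with the weak $L^{p'}$-convergence of the selections, and appealing to the sequential closedness of $\mathrm{Gr}\,F(t,\cdot)\subseteq H\times H_w$ (hypothesis $H(F)_1(ii)$) through Proposition 3.9 of \cite{4}---exactly as in the Remark after $H(F)_1$---I obtain $v(t)\in F(t,\xi(h)(t))$ for almost all $t\in T$, i.e.\ $v\in N(h)$. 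The Kakutani--Ky Fan theorem then produces a fixed point $h_*\in N(h_*)$, and $\hat u=\xi(h_*)$ is the required solution. The main obstacle is precisely this closed-graph verification: the modes of convergence are asymmetric (strong in $C(T,H)$ on the state side, only weak in $L^{p'}$ on the selection side) and must be reconciled via hypothesis $H(F)_1(ii)$.
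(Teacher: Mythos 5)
Your proposal is correct and follows essentially the same route as the paper: the same reduction via Proposition \ref{prop6} and the radial retraction, the same multifunction $h\mapsto S^{p'}_{F(\cdot,\xi(h)(\cdot))}$ on the weakly compact set $W$, the same closed-graph verification combining Proposition \ref{prop7} with Proposition 3.9 of Hu \& Papageorgiou and hypothesis $H(F)_1(ii)$, and the same appeal to the Kakutani--Ky Fan fixed point theorem. The only difference is cosmetic: you spell out the metrizability of the weak topology on $W$ and the convexity/weak compactness of the values, which the paper leaves implicit.
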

\begin{proof}
	We furnish $W\subseteq L^{p'}(T,H)$ with the relative weak topology and consider the multifunction $H:W\rightarrow P_{kc}(W)$ defined by
	$$H(h)=S^{p'}_{F(\cdot,\xi(h)(\cdot))}.$$
	
	Let $\{(h_n,g_n)\}_{n\geq 1}\subseteq {\rm Gr}\,H$ and assume that
	\begin{eqnarray}\label{eq34}
		h_n\stackrel{w}{\rightarrow}h,\ g_n\stackrel{w}{\rightarrow}g\ \mbox{in}\ L^{p'}(T,H)\ \mbox{as}\ n\rightarrow\infty.
	\end{eqnarray}
	
	Then (\ref{eq34}) and Proposition \ref{prop7} imply that
	\begin{equation}\label{eq35}
		\xi(h_n)\rightarrow\xi(h)\ \mbox{in}\ C(T,H)\ \mbox{as}\ n\rightarrow\infty.
	\end{equation}
	
	Invoking Proposition 3.9 of Hu \& Papageorgiou \cite[p. 694]{4}, we have
	\begin{eqnarray*}
		&&g(t)\in\overline{\rm conv}\, w-\limsup\limits_{n\rightarrow\infty}F(t,\xi(h_n)(t))\\
		&&\subseteq F(t,\xi(h)(t))\ \mbox{for almost all}\ t\in T\\
		&&(\mbox{see (\ref{eq35}) and hypothesis}\ H(F)_1(ii)=H(F)'_1(ii))\\
		&\Rightarrow&(h,g)\in {\rm Gr}\,H,\\
		&\Rightarrow&H(\cdot)\ \mbox{is usc (see Proposition 2.23 of Hu \& Papageorgiou \cite[p. 43]{4})}.
	\end{eqnarray*}
	
	By the Kakutani-Ky Fan fixed point theorem (see Theorem 2.6.7 in Papageorgiou, Kyritsi \& Yiallourou \cite[p. 114]{7}), $H(\cdot)$ admits a fixed point. So, there exists $h_0\in W$ such that
	\begin{eqnarray*}
		&& h_0\in H(g_0),\\
		&\Rightarrow&h_0\in S^{p'}_{F(\cdot),\xi(h_0)(\cdot)}.
	\end{eqnarray*}
	
	Let $u_0=\xi(h_0)\in W_p(T)$. Then
	\begin{eqnarray*}
		&&-u'_0(t)\in A(t,u_0(t))+\partial\varphi(u_0(t))+h_0(t)\ \mbox{for almost all}\ t\in T,\ u_0(0)=u_0(b),\\
		&\Rightarrow&u_0\in W_p(T)\ \mbox{is a solution of problem (\ref{eq1})}.
	\end{eqnarray*}
The proof of Theorem \ref{th8} is complete.
\end{proof}

The above existence theorem was proved under the assumption that at least one of $A(t,\cdot)$ and $\partial\varphi(\cdot)$ is strongly monotone (see hypotheses $H(A)'$ and $H(\varphi)$). Next, we remove this requirement.

So, the new hypotheses on $A(t,x)$ and $\partial\varphi(x)$ are the  following:

\smallskip
$H(A)_1:$ $A:T\times X\rightarrow X^*$ is a map such that
\begin{itemize}
	\item[(i)] for all $x\in X,\ t\mapsto A(t,x)$ is measurable;
	\item[(ii)] for almost all $t\in T,\ x\mapsto A(t,x)$ is demicontinuous and
	$$c_0||x||^p\leq \left\langle A(t,x),x\right\rangle\ \mbox{for almost all}\ t\in T,\ \mbox{all}\ x\in X;$$
	\item[(iii)] $||A(t,x),x||_*\leq a_1(t)+c_1||x||^{p-1}$ for almost all $t\in T$, and all $x\in X$ with $a_1\in L^{p'}(T),\ c_1>0$.
\end{itemize}

$H(\varphi)_1:$ $\varphi\in\Gamma_0(H)$ is bounded above on bounded sets, for all $u\in L^p(T,X)$ we have $S^{p'}_{\partial\varphi(u(\cdot))}\neq\emptyset$ and $\partial\varphi(0)\subseteq H$ is bounded.

\smallskip
By the Troyanski renorming theorem (see Gasinski-Papageorgiou \cite[p. 911]{3}) we may assume without any loss of generality that both $X$ and $X^*$ are locally uniformly convex.

Let $\mathcal{F}:X\rightarrow X^*$ be the duality map defined by
$$\mathcal{F}(x)=\{x^*\in X^*:\left\langle x^*,x\right\rangle=||x||^2=||x^*||^2_*\}.$$

The local uniform convexity of $X$ and $X^*$ implies that $\mathcal{F}(\cdot)$ is single-valued, bounded, monotone, bicontinuous bijection (hence maximal monotone, too), coercive and $\mathcal{F}^{-1}$ is the duality map of $X^*$ (see Gasinski \& Papageorgiou \cite{3} and Zeidler \cite{12}).

Note that, if $\psi(x)=\frac{1}{2}||x||^2$ for all $x\in X$, then
$$\mathcal{F}(x)=\partial\psi(x)\ (\mbox{see \cite[p. 132]{3}})$$
and so by Rockafellar \& Wets \cite[p. 565]{10} we have
$$\mathcal{F}(\cdot)\ \mbox{is strongly monotone}.$$

Using this observation we can prove the following existence theorem.
\begin{theorem}\label{th9}
	If hypotheses $H(A)_1,\ H(\varphi)_1$ and $H(F)_1,\ H_0$ or $H(\xi)'_1$ hold, then problem (\ref{eq1}) admits a solution $\hat{u}\in W_p(T)$.
\end{theorem}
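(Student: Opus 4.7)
\begin{proof-sketch}
The plan is to regularize the operator $A$ using the duality map $\mathcal{F}: X \to X^*$, apply Theorem \ref{th8} to a family of perturbed problems, and then pass to the limit as the perturbation vanishes.

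For each $\epsilon \in (0,1]$ I would set $A_\epsilon(t,x) = A(t,x) + \epsilon\mathcal{F}(x)$. Since $\mathcal{F}$ is single-valued, bounded, demicontinuous and strongly monotone (as recalled immediately before the theorem), while $A$ is monotone in $x$ and satisfies $H(A)_1$, one checks that $A_\epsilon$ satisfies hypothesis $H(A)'$: measurability in $t$ is trivial, demicontinuity in $x$ is preserved, the coercivity inequality is only strengthened, the growth bound $H(A)'(iii)$ holds with constants depending on $\epsilon$, and the strong monotonicity required by $H(A)'(ii)$ comes from that of $\mathcal{F}$. Applying Theorem \ref{th8} with $A_\epsilon$ in place of $A$ (keeping $H(\varphi)_1$ and either $H(F)_1$ with $H_0$ or $H(F)_1'$), for every $\epsilon$ there exist $u_\epsilon \in W_p(T)$, $g_\epsilon \in S^{p'}_{\partial\varphi(u_\epsilon(\cdot))}$ and $h_\epsilon \in S^{p'}_{F(\cdot,u_\epsilon(\cdot))}$ with
\[
-u_\epsilon'(t) = A(t,u_\epsilon(t)) + \epsilon\mathcal{F}(u_\epsilon(t)) + g_\epsilon(t) + h_\epsilon(t)\ \mbox{a.e. on}\ T,\qquad u_\epsilon(0) = u_\epsilon(b).
\]

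Next I would derive uniform a priori bounds. Proposition \ref{prop6} provides $|u_\epsilon(t)| \leq M$ for all $t \in T$ independently of $\epsilon$, and the truncation preceding (\ref{eq33}) gives $|h_\epsilon(t)| \leq \hat{\eta}(t)$ with $\hat{\eta} \in L^{p'}(T)$. Acting with $u_\epsilon$, integrating by parts (Proposition \ref{prop1}), and using the periodic boundary condition, the coercivity of $A$, and the boundedness of $\partial\varphi(0)$, I would show $\{u_\epsilon\}$ is bounded in $L^p(T,X)$; then $H(A)_1(iii)$ and the boundedness of $\partial\varphi$ on $\overline{B}_M \subset H$ (from $H(\varphi)_1$) yield $\{u'_\epsilon\}$ bounded in $L^{p'}(T,X^*)$. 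Passing to a subsequence, I may assume $u_\epsilon \xrightarrow{w} \hat{u}$ in $W_p(T)$, $u_\epsilon \to \hat{u}$ in $L^p(T,H)$ and in $C(T,H)$ (the latter via the equicontinuity argument of Proposition \ref{prop7}), along with $g_\epsilon \xrightarrow{w} g$ and $h_\epsilon \xrightarrow{w} h$ in $L^{p'}(T,H)$. Proposition 3.9 of Hu \& Papageorgiou \cite{4} together with $H(F)_1(ii)$ gives $h(t) \in F(t,\hat{u}(t))$ a.e., and periodicity of $\hat{u}$ is preserved under $C(T,H)$-convergence.

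The main technical obstacle is identifying the weak limit of $a(u_\epsilon) + g_\epsilon$ in $L^{p'}(T,X^*)$ as an element of $A(\cdot,\hat{u}(\cdot)) + \partial\Phi(\hat{u})$. For this I would invoke the $L$-pseudomonotonicity of $a(\cdot) + \partial\Phi(\cdot)$ established in the proof of Proposition \ref{prop3}, now with $L$ the derivative operator on the periodic domain $\{u \in W_p(T): u(0) = u(b)\}$ (still linear maximal monotone by Roubicek's lemma). From the perturbed equation,
\[
((a(u_\epsilon) + g_\epsilon, u_\epsilon - \hat{u})) = -((u'_\epsilon, u_\epsilon - \hat{u})) - \epsilon((\mathcal{F}(u_\epsilon), u_\epsilon - \hat{u})) - ((h_\epsilon, u_\epsilon - \hat{u})).
\]
Integration by parts and the periodicity of $u_\epsilon$ and $\hat{u}$ force the first term to vanish in the limit; the strong $L^p(T,H)$ convergence $u_\epsilon \to \hat{u}$ kills the third; and the middle term is controlled by $\epsilon\, \|\mathcal{F}(u_\epsilon)\|_{L^{p'}(T,X^*)}\, \|u_\epsilon - \hat{u}\|_{L^p(T,X)}$, which tends to zero because $\|\mathcal{F}(u_\epsilon(t))\|_* = \|u_\epsilon(t)\|$ keeps $\mathcal{F}(u_\epsilon)$ bounded in $L^{p'}(T,X^*)$ (implicitly $p \geq 2$, as in Proposition \ref{prop7}). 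Hence $\limsup_{\epsilon \to 0^+}((a(u_\epsilon) + g_\epsilon, u_\epsilon - \hat{u})) \leq 0$, and the $L$-pseudomonotonicity yields $a(u_\epsilon) + g_\epsilon \xrightarrow{w} a(\hat{u}) + g$ in $L^{p'}(T,X^*)$ with $g \in \partial\Phi(\hat{u})$. Passing to the limit in the perturbed equation delivers the desired $\hat{u} \in W_p(T)$ solving (\ref{eq1}).
\end{proof-sketch}
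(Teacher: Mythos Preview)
Your proposal is correct and follows essentially the same route as the paper: perturb $A$ by $\epsilon\mathcal{F}$ so that the strong monotonicity hypothesis $H(A)'(ii)$ is in force, invoke Theorem~\ref{th8} for each $\epsilon$, use the uniform bound $|u_\epsilon(t)|\le M$ from Proposition~\ref{prop6} to get boundedness in $W_p(T)$ and relative compactness in $C(T,H)$, and then pass to the limit. The paper's own proof is terser at the final step, simply writing ``as in the proofs of Propositions~\ref{prop3} and~\ref{prop7}''; your explicit verification of the $\limsup$ condition for $L$-pseudomonotonicity (with $L$ now taken on the periodic domain) is exactly the content behind that cross-reference.
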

\begin{proof}
	Let $\epsilon_n\rightarrow 0^+$ and consider the following approximating evolution inclusion
	$$\left\{\begin{array}{l}
		-u'(t)\in A(t,u(t))+\epsilon_n\mathcal{F}(u(t))+\partial\varphi(u(t))+F(t,u(t))\ \mbox{for almost all}\ t\in T,\\
		u(0)=u(b).
	\end{array}\right\}$$
	
	Note that for every $n\in\NN$ the mapping $x\mapsto A(t,x)+\epsilon_n\mathcal{F}(x)$ satisfies the strong monotonicity condition in hypothesis $H(A)(ii)$. So, by Theorem \ref{th8} we can find a solution $u_n\in W_p(T)$ ($n\in\NN$) for the periodic problem. We have $\{u_n\}_{n\geq 1}\subseteq E$ and so
	$$\{u_n\}_{n\geq 1}\subseteq C(T,H)\ \mbox{is relatively compact}.$$
	
	Also, since $|u_n(t)|\leq M$ for all $t\in T$, $n\in\NN$, it follows that
	$$\{u_n\}_{n\geq 1}\subseteq W_p(T)\ \mbox{is bounded}.$$
	
	So, by passing to a suitable subsequence if necessary, we may assume that
	$$u_n\stackrel{w}{\rightarrow}\hat{u}\ \mbox{in}\ W_p(T)\ \mbox{and}\ u_n\rightarrow\hat{u}\ \mbox{in}\ C(T,H).$$
	
	Then as in the proofs of Proposition \ref{prop3} and \ref{prop7}, taking the limit as $n\rightarrow\infty$, we have
	$$\left\{\begin{array}{l}
		-\hat{u}'(t)\in A(t,\hat{u}(t))+\partial\varphi(\hat{u}(t))+F(t,\hat{u}(t))\ \mbox{for almost all}\ t\in T,\\
		\hat{u}(0)=\hat{u}(b),
	\end{array}\right\}$$
	which shows that $\hat{u}\in W_p(T)$ is a solution of (\ref{eq1}).
\end{proof}

Let $\hat{S}_c\subseteq W_p(T)\subseteq C(T,H)$  denote the solution set of ``convex" problem. Then we have the following property.
\begin{theorem}\label{th10}
	If hypotheses $H(A)_1,\ H(\varphi)_1$ and $H(F)_1,\ H_0$ or $H(F)'_1$ hold, then $\hat{S}_c\in P_k(C(t,H))$.
\end{theorem}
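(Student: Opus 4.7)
\begin{proof-sketch}
Non-emptiness of $\hat S_c$ is immediate from Theorem \ref{th9}, so the whole task is the compactness in $C(T,H)$. My plan is to take an arbitrary sequence $\{u_n\}_{n\ge 1}\subseteq\hat S_c$, extract a subsequence converging in $C(T,H)$ to some $\hat u$, and verify $\hat u\in\hat S_c$. For each $n$ I fix selections $h_n\in S^{p'}_{F(\cdot,u_n(\cdot))}$ and $g_n\in L^{p'}(T,H)$ with $g_n(t)\in\partial\varphi(u_n(t))$ a.e., so that $-u'_n=A(\cdot,u_n)+g_n+h_n$ with $u_n(0)=u_n(b)$. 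Proposition \ref{prop6} gives $|u_n(t)|\le M$; hence $|h_n(t)|\le\hat\eta(t)\in L^{p'}(T)$ and, since $\varphi$ is bounded above on bounded sets, the multifunction $\partial\varphi$ is locally bounded, so $\{g_n\}\subseteq L^\infty(T,H)$ is bounded. Acting with $u_n(t)$, using the integration by parts formula (Proposition \ref{prop1}), the periodicity, coercivity $H(A)_1(ii)$ and the bounds on $g_n,h_n$ yields boundedness of $\{u_n\}$ in $L^p(T,X)$; then $H(A)_1(iii)$ and the equation give $\{u'_n\}$ bounded in $L^{p'}(T,X^*)$, so $\{u_n\}$ is bounded in $W_p(T)$.

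Passing to a subsequence, I may assume $u_n\xrightarrow{w}\hat u$ in $W_p(T)$, $u_n\to\hat u$ in $L^p(T,H)$ (compact embedding), $h_n\xrightarrow{w}h$ and $g_n\xrightarrow{w}g$ in $L^{p'}(T,H)$. The periodicity $\hat u(0)=\hat u(b)$ follows from $W_p(T)\hookrightarrow C(T,H)$ and the identities $u_n(0)=u_n(b)$. Proposition 3.9 of Hu \& Papageorgiou combined with $H(F)_1(ii)$ gives $h(t)\in F(t,\hat u(t))$ a.e., and the maximal monotonicity of $\partial\Phi$ on $L^p(T,H)\times L^{p'}(T,H)$ (its graph is sequentially closed in the strong $\times$ weak topology) gives $g(t)\in\partial\varphi(\hat u(t))$ a.e.

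To pass to the limit in the $A$-term I verify $\limsup_n((a(u_n),u_n-\hat u))\le 0$ and invoke $L$-pseudo\-monotonicity of $a$ (Proposition \ref{prop3}). Write $((a(u_n),u_n-\hat u))=-((u'_n,u_n-\hat u))-\int_0^b(g_n+h_n,u_n-\hat u)\,dt$. The second piece tends to zero since $g_n+h_n$ is bounded in $L^{p'}(T,H)$ and $u_n\to\hat u$ in $L^p(T,H)$. For the first piece, periodicity gives $\int_0^b\langle u'_n,u_n\rangle\,dt=\tfrac12(|u_n(b)|^2-|u_n(0)|^2)=0$, while $((u'_n,\hat u))\to((\hat u',\hat u))=0$ by periodicity of $\hat u$. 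Thus $a(u_n)\xrightarrow{w}a(\hat u)$ in $L^{p'}(T,X^*)$, and passing to the limit in $-u'_n=a(u_n)+g_n+h_n$ shows $\hat u\in\hat S_c$.

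The main obstacle is upgrading weak convergence in $W_p(T)$ to uniform convergence in $C(T,H)$. For this, I apply Proposition \ref{prop1} to $u_n-\hat u$ on $[s,t]$ to obtain
\begin{equation*}
\tfrac12|u_n(t)-\hat u(t)|^2=\tfrac12|u_n(s)-\hat u(s)|^2-\int_s^t\langle A(\cdot,u_n)-A(\cdot,\hat u),u_n-\hat u\rangle\,d\tau-\int_s^t(g_n-g+h_n-h,u_n-\hat u)\,d\tau.
\end{equation*}
The monotonicity of $A(t,\cdot)$ and $\partial\varphi$ kills the first integral and the $g_n-g$ part of the second, leaving
\begin{equation*}
|u_n(t)-\hat u(t)|^2\le|u_n(s)-\hat u(s)|^2-2\int_s^t(h_n-h,u_n-\hat u)\,d\tau.
\end{equation*}
Since $u_n\to\hat u$ in $L^p(T,H)$ and $\{h_n-h\}$ is bounded in $L^{p'}(T,H)$, the integral converges to zero uniformly in $t\in[s,b]$. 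Choosing a sequence $s=\varepsilon_m\to 0^+$ along which $u_n(\varepsilon_m)\to\hat u(\varepsilon_m)$ in $H$ gives $\sup_{t\in[\varepsilon_m,b]}|u_n(t)-\hat u(t)|\to 0$; in particular $u_n(b)\to\hat u(b)$ strongly in $H$, and the periodic boundary condition $u_n(0)=u_n(b)$, $\hat u(0)=\hat u(b)$ then gives $u_n(0)\to\hat u(0)$ in $H$. Running the same inequality from $s=0$ to $t$ produces $\sup_{t\in[0,\varepsilon_m]}|u_n(t)-\hat u(t)|\to 0$, and combining yields $u_n\to\hat u$ in $C(T,H)$, which finishes the compactness.
\end{proof-sketch}
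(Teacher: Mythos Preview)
The paper states Theorem~\ref{th10} without proof. Your argument is correct and is precisely the natural one given the tools already in place: Proposition~\ref{prop6} for the {\it a priori} bound, the coercivity and growth estimates for $W_p(T)$-boundedness, the $L$-pseudomonotonicity of $a(\cdot)$ together with demiclosedness of $\partial\Phi$ and Proposition~3.9 of \cite{4} to identify the weak limit as a solution (exactly as in the proofs of Proposition~\ref{prop3} and Theorem~\ref{th9}), and finally the energy inequality on $[\varepsilon_m,t]$ combined with the periodic boundary condition (as in the proof of Proposition~\ref{prop7}) to upgrade to $C(T,H)$-convergence. One cosmetic remark: once you have obtained $u_n(0)\to\hat u(0)$ in $H$, the energy estimate starting from $s=0$ already gives $\sup_{t\in T}|u_n(t)-\hat u(t)|\to 0$, so the separate treatment of $[0,\varepsilon_m]$ is unnecessary.
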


\section{The ``nonconvex" problem}

In this section we consider problem (\ref{eq1}) when the multivalued perturbation $F(t,x)$ has nonconvex values.

Now, the hypotheses on $F(t,x)$ are the following:

$H(F)_2:$ $F:T\times H\rightarrow P_f(H)$ is a multifunction such that

\smallskip
\begin{itemize}
	\item[(i)] the mapping $(t,x)\mapsto F(t,x)$ is graph measurable;
	\item[(ii)] for almost all $t\in T,\ x\mapsto F(t,x)$ is lsc;
	\item[(iii)] there exists $M>0$ such that
	\begin{eqnarray*}
		&&0\leq(h,x)\ \mbox{for almost all}\ t\in T,\ \mbox{and all}\ |x|=M,\ h\in F(t,x),\\
		&&|F(t,x)|\leq a_M(t)\ \mbox{for almost all}\ t\in T,\ \mbox{and all}\ |x|\leq M,\ \mbox{with}\ a_M\in L^{p'}(T).
	\end{eqnarray*}
\end{itemize}

Alternatively, we can assume the following:

\smallskip
$H(F)'_2:$ $F:T\times H\rightarrow P_f(H)$ is a multifunction such that hypotheses $H(F)'_2(i),\, (ii)$ are the same as the corresponding hypotheses $H(F)_2(i),\, (ii)$ and
\begin{itemize}
	\item[(iii)] $|F(t,x)|\leq k(t)[1+|x|]$ for almost all $t\in T$, and all $x\in H$, with $k\in L^{p'}(T)$.
\end{itemize}

Following the approach of the previous section, we first consider problem (\ref{eq1}) under the strong monotonicity conditions.
\begin{theorem}\label{th11}
	If hypotheses $H(A),\ H(\varphi)$ or $H(A)',\ H(\varphi)'$ and $H(F)_2,\ H_0$ or $H(F)'_2$ hold, then problem (\ref{eq1}) admits a solution $\hat{u}\in W_p(T).$
\end{theorem}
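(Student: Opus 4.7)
The plan is to reduce the nonconvex problem to a single-valued fixed-point problem: I would compose the solution map $\xi$ of Section 3 with a continuous selection obtained from the Bressan--Colombo theorem, and then apply Schauder's fixed point theorem. The overall architecture parallels the proof of Theorem \ref{th8}, but the Kakutani--Ky Fan step is replaced by a continuous selection step, since $F$ is no longer convex-valued and we cannot apply a multivalued fixed point theorem directly.

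As in the convex case I would first replace $F$ by the truncated multifunction $\hat F(t,x) = F(t, p_M(x))$, where $p_M$ is the $M$-radial retraction and $M$ is produced either by $H(F)_2(iii)$ together with $H_0$ (via a repetition of the argument in Proposition \ref{prop6}(a), which uses only the sign and bound conditions on $F$ and not the convexity of its values) or by Proposition \ref{prop6}(b) under $H(F)'_2$. Then $|\hat F(t,x)| \le \hat\eta(t)$ a.e.\ for all $x \in H$ with $\hat\eta \in L^{p'}(T)$, and any solution of the truncated inclusion satisfies $|\hat u(t)| \le M$ and is hence a solution of (\ref{eq1}). Set
\[
W = \{ h \in L^{p'}(T,H) : |h(t)| \le \hat\eta(t)\ \mbox{for almost all}\ t \in T \},
\]
which is convex, weakly compact and, as a bounded set in the separable reflexive space $L^{p'}(T,H)$, metrizable in the weak topology. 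By Proposition \ref{prop7}, $\xi : W_w \to C(T,H)$ is continuous.

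Define $R : W \to 2^{L^{p'}(T,H)} \setminus \{\emptyset\}$ by $R(h) = S^{p'}_{\hat F(\cdot, \xi(h)(\cdot))}$. Its values are nonempty (by the joint graph measurability in $H(F)_2(i)$ together with the Yankov--von Neumann--Aumann theorem), closed, and decomposable in $L^{p'}(T,H)$, and $R(W) \subseteq W$. The key technical step is to show that $R$ is lower semicontinuous from $W_w$ into $L^{p'}(T,H)$ with the strong topology. Given $h_n \to h$ weakly in $W$, Proposition \ref{prop7} gives $\xi(h_n) \to \xi(h)$ uniformly on $T$; then using hypothesis $H(F)_2(ii)$ (pointwise lsc of $F(t,\cdot)$), for every $g \in R(h)$ one constructs approximations $g_n \in R(h_n)$ with $g_n \to g$ in $L^{p'}(T,H)$ by the standard selection procedure for decomposable-valued lsc multifunctions (see Hu \& Papageorgiou \cite[Section 2.3]{4}).

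Once the lower semicontinuity of $R$ is established, the Bressan--Colombo continuous selection theorem provides a continuous map $r : W_w \to L^{p'}(T,H)$ (strong topology) with $r(h) \in R(h) \subseteq W$. Since strong convergence implies weak convergence, $r : W_w \to W_w$ is continuous, and $W_w$ is a nonempty, convex, compact, metrizable set. Schauder's fixed point theorem then supplies $h^* \in W$ with $h^* = r(h^*) \in S^{p'}_{\hat F(\cdot, \xi(h^*)(\cdot))}$; setting $\hat u = \xi(h^*) \in W_p(T)$ produces a solution of the truncated inclusion, which by the a priori bound $|\hat u(t)| \le M$ is a solution of (\ref{eq1}). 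The main obstacle is the lower semicontinuity of $R$, which combines the complete continuity of $\xi$ from Proposition \ref{prop7} with the passage from pointwise lsc of $F(t,\cdot)$ to lsc of the decomposable selector set; all other ingredients (the truncation, the fixed-point step, and the return to the original $F$) are essentially the same as in Section 3.
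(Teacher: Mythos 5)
Your proposal is correct and follows essentially the same route as the paper: truncation via Proposition \ref{prop6}, complete continuity of $\xi$ from Proposition \ref{prop7}, lower semicontinuity of the decomposable selector multifunction, a Bressan--Colombo continuous selection, and Schauder's fixed point theorem. The only (inessential) difference is that you run the fixed-point argument on the control set $W$ with the weak topology, whereas the paper works on the trajectory set $E_*=\overline{\rm conv}\,\xi(W)$ in $C(T,H)$ with the map $\tau=\xi\circ v$; both compositions yield the same solution.
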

\begin{proof}
	Clearly, Proposition \ref{prop6} can also be applied here. So, by replacing $F(t,x)$ with $\hat{F}(t,x)$, we may assume that
	\begin{equation}\label{eq36}
		|F(t,x)|\leq\hat{\eta}(t)\ \mbox{for almost all}\ t\in T,\ \mbox{and all}\ x\in H,\ \mbox{with}\ \hat{\eta}\in L^{p'}(T).
	\end{equation}
	
	As before, we introduce the set
	$$W=\{h\in L^{p'}(T,H):|h(t)|\leq\hat{\eta}(t)\ \mbox{for almost all}\ t\in T\}.$$
	
	On account of Proposition \ref{prop7}, we have
	$$\xi=\xi(W)\in P_k(C(T,H)).$$
	
	Let $E_*=\overline{\rm conv}\,E\in P_{kc}(C(T,H))$ and consider the multifunction $H:E_*\rightarrow P_{wk}(L^{p'}(T,H))$ defined by
	$$H(u)=S^{p'}_{F(\cdot,u(\cdot))}\ \mbox{for all}\ u\in E_*\ (\mbox{see (\ref{eq36})}).$$
	
	We claim that $H(\cdot)$ is lsc. According to Proposition 2.6 of Hu \& Papageorgiou \cite[p. 37]{4}, to show the lower semicontinuity of $H(\cdot)$, it suffices to prove that
$$\mbox{if $u_n\rightarrow u$ in $C(T,H)$, then $H(u)\leq\liminf_{n\rightarrow\infty}H(u_n)$}.$$

	Let $h\in V(u)$ and for every $n\in\NN$ consider the multifunction $G_n:T\rightarrow P_{wk}(H)$ defined by
	$$G_n(t)=\left\{v\in F(t,u_n(t)):|h(t)-v|\leq d(h(t),F(t,u_n(t)))+\frac{1}{n}\right\}.$$
	
	Hypothesis $H(F)_2(i)$ implies that the mapping $t\mapsto F(t,u_n(t))$ is measurable for every $n\in\NN$. It follows that
	$${\rm Gr}\,G_n\in\mathcal{L}_T\otimes B(H),$$
	with $\mathcal{L}_T$ being the Lebesgue $\sigma$-field of $T$ and $B(H)$ the Borel $\sigma$-field of $H$. Invoking the Yankov-von Neumann-Aumann selection theorem, we can find a measurable function $h_n:T\rightarrow H$ such that
	\begin{eqnarray*}
		&&h_n(t)\in G_n(t)\ \mbox{for almost}\ t\in T,\\
		&\Rightarrow&|h(t)-h_n(t)|\leq d(h(t),F(t,u_n(t)))+\frac{1}{n}\ \mbox{for almost all}\ t\in T,\ \mbox{and all}\ n\in\NN,\\
		&\Rightarrow&\limsup\limits_{n\rightarrow\infty}|h(t)-h_n(t)|\leq\limsup\limits_{n\rightarrow\infty}d(h(t),F(t,u_n(t)))\\
		&&\leq d(h(t),\liminf_{n\rightarrow\infty}F(t,u_n(t)))\\
		&&(\mbox{see Proposition 1.47 in Hu \& Papageorgiou \cite[p. 672]{4}})\\
		&&\leq d(h(t),F(t,u(t)))\ \mbox{for almost all}\ t\in T,\\
		&&(\mbox{see (\ref{eq37}) and hypothesis}\ H(F)_2(ii)),\\
		&\Rightarrow&h_n(t)\rightarrow h(t)\ \mbox{in}\ H\ \mbox{for almost all}\ t\in T,\\
		&\Rightarrow&h_n\rightarrow h\ \mbox{in}\ L^{p'}(T,H)\ \mbox{and}\ h_n\in H(u_n)=S^{p'}_{F(\cdot,u_n(\cdot))}\ \mbox{for all}\ n\in\NN,\\
		&\Rightarrow&H(\cdot)\ \mbox{is lsc}.
	\end{eqnarray*}
	
	Also, $H(\cdot)$ has decomposable values. So, we can apply the Bressan-Colombo  selection theorem \cite{1} and find  a continuous map $v:E_*\rightarrow L^{p'}(T,H)$ such that
	$$v(u)\in H(u)\ \mbox{for all}\ u\in E_*.$$
	
	We define $\tau=\xi\circ v: E_*\rightarrow E_*$. This is a continuous map (see Proposition \ref{prop7}). Invoking the Schauder fixed point theorem, we can find $\hat{u}\in E_*$ such that
	$$\hat{u}=\tau(\hat{u}).$$
	
	From the definition of $H(\cdot)$ we see that $\hat{u}\in W_p(T)$ is a solution of the nonconvex problem. The proof is now complete.
\end{proof}

Since we have the result for the strongly monotone case, we can now pass to the general setting.
\begin{theorem}\label{th12}
	If hypotheses $H(A)_1,H(\varphi)_1$ and $H(F)_2,H_0$ or $H(F)'_2$ hold, then problem (\ref{eq1}) admits a solution $\hat{u}\in W_p(T)$.
\end{theorem}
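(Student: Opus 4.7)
The plan is to combine the duality-map regularization that promoted Theorem \ref{th8} to Theorem \ref{th9} with the Bressan--Colombo continuous selection construction of Theorem \ref{th11}. Since $\mathcal{F}$ is strongly monotone on $X$, the perturbed operator $A_n(t,x):=A(t,x)+\epsilon_n\mathcal{F}(x)$ satisfies hypothesis $H(A)'(ii)$ for every $\epsilon_n>0$. The key refinement needed on top of the proof of Theorem \ref{th11} is to fix \emph{one} continuous Bressan--Colombo selection $v$, depending only on $F$, that is reused across every approximation. The uniform bounds of Propositions \ref{prop5} and \ref{prop6} are independent of $n$ because $\epsilon_n\mathcal{F}$ only strengthens coercivity, so all approximating trajectories lie in a common compact convex set $E_{*}\subseteq C(T,H)$ on which we can construct $v:E_{*}\rightarrow L^{p'}(T,H)$ with $v(u)\in S^{p'}_{F(\cdot,u(\cdot))}$.

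Let $\epsilon_n\rightarrow 0^+$ and let $\xi_n$ denote the solution operator of the periodic problem (\ref{eq19}) with $A$ replaced by $A_n$. By Proposition \ref{prop7}, $\xi_n$ is completely continuous, so $\tau_n:=\xi_n\circ v:E_{*}\rightarrow E_{*}$ is continuous with relatively compact range. Schauder's fixed point theorem yields $u_n\in E_{*}$ with $u_n=\tau_n(u_n)$; hence $u_n\in W_p(T)$ solves
\begin{equation*}
-u_n'(t)\in A(t,u_n(t))+\epsilon_n\mathcal{F}(u_n(t))+\partial\varphi(u_n(t))+v(u_n)(t)\ \mbox{for a.a.}\ t\in T,\ u_n(0)=u_n(b).
\end{equation*}
Proposition \ref{prop6} and standard energy estimates (using Proposition \ref{prop1}, $H(A)_1$, $H(\varphi)_1$, the periodic boundary condition, and the uniform bound $|v(u_n)(t)|\leq\hat{\eta}(t)$) show that $\{u_n\}_{n\geq 1}$ is bounded in $W_p(T)$. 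Since $W_p(T)\hookrightarrow C(T,H)$ compactly, along a subsequence $u_n\xrightarrow{w}\hat{u}$ in $W_p(T)$ and $u_n\rightarrow\hat{u}$ in $C(T,H)$.

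The main obstacle is the passage to the limit in the operator term, since $A(t,\cdot)$ is only monotone. I would invoke the $L$-pseudomonotonicity of $a(\cdot)$ established in the proof of Proposition \ref{prop3}. The regularizing contribution vanishes in the sense that $\epsilon_n\mathcal{F}(u_n)\rightarrow 0$ in $L^{p'}(T,X^*)$ (because $\mathcal{F}$ is bounded and $\{u_n\}$ is bounded in $L^p(T,X)$); the continuity of $v$ and $u_n\rightarrow\hat{u}$ in $C(T,H)$ give $v(u_n)\rightarrow v(\hat{u})=:\hat{h}$ strongly in $L^{p'}(T,H)$ with $\hat{h}\in S^{p'}_{F(\cdot,\hat{u}(\cdot))}$; and a subsequential weak limit $\hat{g}$ of the subdifferential selections $g_n\in S^{p'}_{\partial\varphi(u_n(\cdot))}$ satisfies $\hat{g}\in\partial\Phi(\hat{u})$ by the maximal monotonicity of $\partial\Phi$ together with $u_n\rightarrow\hat{u}$ in $L^p(T,H)$. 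Acting on the inclusion with $u_n-\hat{u}$, invoking Proposition \ref{prop1} and the periodic condition, these strong convergences yield $\limsup((a(u_n),u_n-\hat{u}))\leq 0$; then the $L$-pseudomonotonicity of $a$ gives $a(u_n)\xrightarrow{w}a(\hat{u})$ in $L^{p'}(T,X^*)$. Passage to the limit in the equation produces $-\hat{u}'\in a(\hat{u})+\hat{g}+\hat{h}$ with $\hat{u}(0)=\hat{u}(b)$, so $\hat{u}\in W_p(T)$ solves (\ref{eq1}).
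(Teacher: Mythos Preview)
Your proof is correct and follows essentially the same route as the paper: regularize with $\epsilon_n\mathcal{F}$, invoke Theorem~\ref{th11} (via the Bressan--Colombo selection $v$ and Schauder) to produce $u_n=(\xi_n\circ v)(u_n)$, extract a subsequence, and pass to the limit using the $L$-pseudomonotonicity of $a(\cdot)$ together with $g_n\xrightarrow{w}g\in\partial\Phi(\hat u)$ and $v(u_n)\rightarrow v(\hat u)$. One small correction: the embedding $W_p(T)\hookrightarrow C(T,H)$ is continuous but not compact; the strong $C(T,H)$-convergence of $\{u_n\}$ comes instead from the compactness of $E_*$, which you had already recorded.
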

\begin{proof}
	Let $\epsilon_n\rightarrow 0^+$ and consider the approximate problem
	$$\left\{\begin{array}{l}
		-u'(t)\in A(t,u(t))+\epsilon_n\mathcal{F}(u(t))+\partial\varphi(u(t))+F(t,u(t))\ \mbox{for almost all}\ t\in T,\\
		u(0)=u(b).
	\end{array}\right\}$$
	
	This problem satisfies the conditions of Theorem \ref{th11} and we obtain a solution $u_n\in W_p(T)$ for all $n\in\NN$. From the proof of Theorem \ref{th11}, we know that $\{u_n\}_{n\geq 1}\subseteq E_*$. Hence $\{u_n\}_{n\geq 1}\subseteq W_p(T)$ is bounded and given the compactness of $E_*\subseteq C(T,H)$, we may assume that
	$$u_n\stackrel{w}{\rightarrow}\hat{u}\ \mbox{in}\ W_p(T)\ \mbox{and}\ u_n\rightarrow\hat{u}\ \mbox{in}\ C(T,H).$$
	
	We have that
	$$u_n=(\xi_n\circ v)(u_n)\ \mbox{for all}\ n\in\NN$$
	with $\xi_n(\cdot)$ being the solution map for the approximate problem (see Proposition \ref{prop7}) and $v(\cdot)$ is the continuous selection of the multifunction $H(\cdot)$ (see the proof of Theorem \ref{th11}). We have
	$$\left\{\begin{array}{l}
		-u'_n(t)=A(t,u_n(t))+\epsilon_n\mathcal{F}(u_n(t))+g_n(t)+v(u_n)(t)\ \mbox{for almost all}\ t\in T,\\
		u_n(0)=u_n(b)
	\end{array}\right\}$$
	with $g_n\in\partial\Phi(u_n),n\in\NN$. Since $\partial\phi(\cdot)$ maps bounded sets to bounded sets we may assume that
	$$g_n\stackrel{w}{\rightarrow}g\ \mbox{in}\ L^{p'}(T,H)\ \mbox{and}\ g\in\partial\Phi(u)$$
	(since $\partial\Phi(\cdot)$ is maximal monotone). As in the proof of Proposition \ref{prop3}, using the $L$-pseudo\-mo\-no\-tonicity of $a(\cdot)$, in the limit as $n\rightarrow\infty$, we obtain
	$$\left\{\begin{array}{l}
		-u'(t)=A(t,u(t))+g(t)+v(u)(t)\ \mbox{almost everywhere on}\ T,\\
		u(0)=u(b).
	\end{array}\right\}$$
	We conclude that $u\in W_p(T)$ is a solution of the nonconvex problem.
\end{proof}

\section{Extremal trajectories}

In this section we establish the existence of extremal periodic trajectories for problem (\ref{eq1}), that is, solutions which move through the extreme points of the multivalued perturbation $F(t,x)$. We know that even if $F(t,\cdot)$ is regular, the multifunction $x\mapsto {\rm ext}\, F(t,x)$ assigning the extreme points of $F(t,x)$, need not have any continuity properties (see Hu \& Papageorgiou \cite[Section 2.4]{4}).

In this section, the problem under consideration is the following:
\begin{eqnarray}\label{eq38}
	\left\{\begin{array}{l}
		-u'(t)\in A(t,u(t))+\partial\varphi(u(t))+{\rm ext}\, F(t,u(t))\ \mbox{for almost all}\ t\in T,\\
		u(0)=u(b).
	\end{array}\right\}
\end{eqnarray}

To be able to solve (\ref{eq38}), we need to strengthen the conditions on the multifunction $F(t,x):$

\smallskip
$H(F)_3:$ $F:T\times H\rightarrow P_{fc}(H)$ is a multifunction such that
\begin{itemize}
	\item[(i)] for all $x\in H,\ t\mapsto F(t,x)$ is graph measurable;
	\item[(ii)] for all $t\in T, F(t,\cdot)$ is $h$-continuous;
	\item[(iii)] there exists $M>0$ such that
	\begin{eqnarray*}
		&&0\leq(h,x)\ \mbox{for almost all}\ t\in T,\ \mbox{and all}\ |x|=M,\  h\in F(t,x),\\
		&&|F(t,x)|\leq a_M(t)\ \mbox{for almost all}\ t\in T,\ \mbox{and all}\ |x|\leq M,\ \mbox{with}\ a_M\in L^{p'}(T).
	\end{eqnarray*}
\end{itemize}

Alternatively, we can assume the following:

\smallskip
$H(F)'_3:$ $F:T\times H\rightarrow P_{fc}(H)$ is a multifunction such that hypotheses $H(F)'_3(i),(ii)$ are the same as the corresponding hypotheses $H(F)_3(i),(ii)$ and
\begin{itemize}
	\item[(iii)] $|F(t,x)|\leq k(t)[1+|x|]$ for almost all $t\in T$, and all $x\in H$, with $k\in L^{p'}(T)$
\end{itemize}

Again, we first deal with problem (\ref{eq38}) under the strong monotonicity conditions on the data.
\begin{theorem}\label{th13}
	If hypotheses $H(A),\ H(\varphi)$ or $H(A)',\ H(\varphi)'$ and $H(F)_3,\ H_0$ or $H(F)'_3$ hold, then problem (\ref{eq38}) admits a solution $\hat{u}\in W_p(T)$.
\end{theorem}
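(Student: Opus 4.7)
The plan is to mimic the architecture of the proof of Theorem \ref{th11}, replacing the Bressan--Colombo continuous selection (which produced sections into $S^{p'}_{F(\cdot,u(\cdot))}$) by a continuous extremal selection theorem of Tolstonogov type, which produces sections into the much thinner set $S^{p'}_{{\rm ext}\,F(\cdot,u(\cdot))}$. The strong monotonicity hypotheses enter only through Propositions \ref{prop5}, \ref{prop6} and \ref{prop7}, which remain available. First I would use Proposition \ref{prop6} and the $M$-radial retraction to replace $F(t,x)$ by $\hat F(t,x)=F(t,p_M(x))$, so that $|\hat F(t,x)|\le\hat\eta(t)$ for some $\hat\eta\in L^{p'}(T)$ and every $a$ priori solution of the truncated extremal problem automatically satisfies $|u(t)|\le M$, hence solves the original (\ref{eq38}). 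Then I would reintroduce the sets $W=\{h\in L^{p'}(T,H):|h(t)|\le\hat\eta(t)\text{ a.e.}\}$, $E=\xi(W)\in P_k(C(T,H))$ (see (\ref{eq33})) and the convex compact envelope $E_\ast=\overline{\rm conv}\,E\in P_{kc}(C(T,H))$.

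The central ingredient is a continuous selection into the extremal sections. Since $F$ has $P_{fc}$-values, $t\mapsto F(t,x)$ is graph measurable and $F(t,\cdot)$ is $h$-continuous, Tolstonogov's continuous extremal selection theorem applies (see Hu \& Papageorgiou \cite[Ch.\ 2]{4}) and yields a continuous map
\[
v:E_\ast\longrightarrow L^{1}_{w}(T,H),\qquad v(u)(t)\in{\rm ext}\,F(t,u(t))\ \text{for a.a.}\ t\in T,
\]
where the target is $L^{1}(T,H)$ endowed with the weak norm $\|\cdot\|_w$. Since $|v(u)(t)|\le\hat\eta(t)$ a.e., the range of $v$ lies in $W$, which is bounded in $L^{p'}(T,H)$.

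Next I would check that $\tau:=\xi\circ v:E_\ast\to E_\ast$ is continuous (with $E_\ast$ carrying the $C(T,H)$-topology). Suppose $u_n\to u$ in $C(T,H)$; then $\|v(u_n)-v(u)\|_w\to0$. On the bounded set $W\subseteq L^{p'}(T,H)$ the weak norm topology coincides with the relative weak topology of $L^{p'}(T,H)$ (the weak norm is equivalent to the Pettis norm, cf.\ Egghe \cite{2}), so $v(u_n)\xrightarrow{w} v(u)$ in $L^{p'}(T,H)$. Proposition \ref{prop7} then gives $\xi(v(u_n))\to\xi(v(u))$ in $C(T,H)$, proving continuity of $\tau$. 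Since $E_\ast$ is compact and convex and $\tau(E_\ast)\subseteq E_\ast$, the Schauder fixed point theorem furnishes $\hat u\in E_\ast$ with $\hat u=\xi(v(\hat u))$. By the definition of $\xi$ and $v$, this $\hat u\in W_p(T)$ solves (\ref{eq38}).

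The main obstacle is establishing the existence of the continuous extremal selection $v$: the multifunction $u\mapsto{\rm ext}\,F(t,u(t))$ generally loses both lower semicontinuity and convex/decomposable structure, so the Bressan--Colombo theorem used in Theorem \ref{th11} is inapplicable; one must quote a refined selection theorem (Tolstonogov) whose proof relies on an $L^1_w$-density argument and a careful use of the $h$-continuity of $F(t,\cdot)$. Everything else—truncation, coercivity bound, $L$-pseudomonotonicity of $a(\cdot)+\partial\Phi(\cdot)$, complete continuity of $\xi$, and the Schauder fixed-point step—is a direct transcription of the arguments already set up in Sections 3 and 4.
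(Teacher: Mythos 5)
Your proposal is correct and follows essentially the same route as the paper: truncation via Proposition \ref{prop6}, the continuous extremal selection theorem (the paper cites it as Theorem 8.31 of Hu \& Papageorgiou, i.e.\ the Tolstonogov result you name) giving $\gamma:E_*\rightarrow L^1_w(T,H)$ with values in $S^{p'}_{{\rm ext}\,F(\cdot,u(\cdot))}$, the upgrade from $\|\cdot\|_w$-convergence to weak $L^{p'}(T,H)$-convergence on the bounded set $W$ (the paper invokes Lemma 2.8 of Hu \& Papageorgiou, Vol.\ II), complete continuity of $\xi$ from Proposition \ref{prop7}, and the Schauder fixed point theorem on $E_*$. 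No substantive differences.
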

\begin{proof}
	The {\it a priori} bounds from Proposition \ref{prop6}, allow us to replace $F(t,x)$ by $\hat{F}(t,x)=F(t,p_M(x))$. So, without any loss of generality, we may assume that
	\begin{equation}\label{eq39}
		|F(t,x)|\leq\hat{\eta}(t)\ \mbox{for almost all}\ t\in T,\ \mbox{and all}\ x\in H,\ \mbox{with}\ \hat{\eta}\in L^{p'}(T).
	\end{equation}
	
	As before, we introduce the set
	$$W=\{h\in L^{p'}(T,H):|h(t)|\leq\hat{\eta}(t)\ \mbox{for almost all}\ t\in T\}$$
	and we define
	$$E_*=\overline{\rm conv}\,\xi(W)\in P_{kc}(C(T,H)).$$
	
	Theorem 8.31 of Hu \& Papageorgiou \cite[p. 260]{4} implies that there exists a continuous map $\gamma:E_*\rightarrow L^1_w(T,H)$ such that
	\begin{eqnarray}\label{eq40}
		&&\gamma(u)\in {\rm ext}\, S^{p'}_{F(\cdot,u(\cdot))}=S^{p'}_{{\rm ext}\, F(\cdot,u(\cdot))}\ \mbox{for all}\ u\in E_*\\
		&&(\mbox{see Hu \& Papageorgiou \cite[Theorem 4.5, p. 191]{4}}).\nonumber
	\end{eqnarray}
	
	We consider the map  $\sigma=\xi\circ\gamma:E_*\rightarrow E_*$ (see (\ref{eq39})). Suppose that $u_n\rightarrow u$ in $E_*$. Then $\gamma(u_n)\rightarrow\gamma(u)$ in $L^1_w(T,H)$. Invoking Lemma 2.8 of Hu \& Papageorgiou \cite[p. 24]{5}, we have $\gamma(u_n)\stackrel{w}{\rightarrow}\gamma(u)$ in $L^{p'}(T,H)$. Then by Proposition \ref{prop7}, we have
	\begin{eqnarray*}
		&&\sigma(u_n)=\xi(\gamma(u_n))\rightarrow\xi(\gamma(u))=\sigma(u)\ \mbox{in}\ C(T,H),\\
		&\Rightarrow&\sigma: E_*\rightarrow E_*\ \mbox{is continuous}.
	\end{eqnarray*}
	
	Since $E_*\in P_{kc}(C(T,H))$, we can apply the Schauder fixed point theorem and find $\hat{u}\in E_*$ such that
	\begin{eqnarray*}
		&&\hat{u}=\sigma(\hat{u}),\\
		&\Rightarrow & \hat{u}\in W_p(T)\ \mbox{is a solution of problem (\ref{eq38}) (see (\ref{eq40}))}.
	\end{eqnarray*}
The proof of Theorem \ref{th13} is complete.
\end{proof}

Next, we remove the strong monotonicity condition.
\begin{theorem}\label{th14}
	If hypotheses $H(A)_1,\ H(\varphi)_1$ and $H(F)_3,\ H_0$ or $H(F)'_3$ hold, then problem (\ref{eq38}) admits a solution $\hat{u}\in W_p(T)$.
\end{theorem}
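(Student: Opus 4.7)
The strategy parallels the passage from Theorem \ref{th11} to Theorem \ref{th12}: regularize problem (\ref{eq38}) by inserting the duality map $\mathcal{F}$ as a strongly monotone perturbation, apply Theorem \ref{th13} to the regularized problem, and pass to the limit. Explicitly, for $\epsilon_n \to 0^+$, consider the approximate extremal periodic problem
\begin{equation*}
-u'(t) \in A(t,u(t)) + \epsilon_n \mathcal{F}(u(t)) + \partial\varphi(u(t)) + {\rm ext}\, F(t,u(t)) \text{ a.e. on } T, \quad u(0)=u(b).
\end{equation*}
Since $\mathcal{F} = \partial\psi$ with $\psi(x) = \tfrac{1}{2}\|x\|^2$ is strongly monotone, $A(t,\cdot) + \epsilon_n\mathcal{F}(\cdot)$ satisfies $H(A)'(ii)$. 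Hence Theorem \ref{th13} applied under $H(A)'$, $H(\varphi)_1$ (treated as $H(\varphi)'$ for this theorem) and $H(F)_3, H_0$ or $H(F)'_3$, yields $u_n \in W_p(T)$ solving the approximate extremal problem.

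Next, I would establish uniform compactness. Proposition \ref{prop6} applies to every $u_n$ since the $\epsilon_n\mathcal{F}$ term only contributes a nonnegative summand when tested with $u_n$; therefore $|u_n(t)| \leq M$ on $T$ uniformly in $n$, so we may replace $F$ by $\hat F = F\circ p_M$ and assume $|F(t,\cdot)| \leq \hat\eta(t)$ with $\hat\eta \in L^{p'}(T)$. Arguing as in the proof of Proposition \ref{prop7} (using $H(A)_1(iii)$ and the boundedness of $\partial\varphi$ on bounded sets from $H(\varphi)_1$) I would derive a uniform bound $\{u_n\}_n \subset W_p(T)$. The compact embedding $W_p(T)\hookrightarrow C(T,H)$ then gives a subsequence with
\begin{equation*}
u_n \xrightarrow{w} \hat u \text{ in } W_p(T), \qquad u_n \to \hat u \text{ in } C(T,H).
\end{equation*}

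To identify the limit as an extremal trajectory, I would invoke the fixed point construction inside Theorem \ref{th13}: $u_n = \xi_n(\gamma(u_n))$ where $\xi_n$ is the solution map for the approximate linear problem and $\gamma: E_* \to L^1_w(T,H)$ is the Bressan--Colombo continuous selection with $\gamma(u) \in S^{p'}_{{\rm ext}\, F(\cdot,u(\cdot))}$. Because $\gamma$ depends only on $F$ and $E_*$ (not on $\epsilon_n$), continuity gives $\gamma(u_n) \to \gamma(\hat u)$ in $L^1_w(T,H)$, and then by Lemma 2.8 of Hu \& Papageorgiou \cite[p.~24]{5} we have $\gamma(u_n) \xrightarrow{w} \gamma(\hat u)$ in $L^{p'}(T,H)$, with $\gamma(\hat u)(t) \in {\rm ext}\, F(t,\hat u(t))$ a.e.

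Finally I would pass to the limit in
\begin{equation*}
-u_n'(t) = A(t,u_n(t)) + \epsilon_n \mathcal{F}(u_n(t)) + g_n(t) + \gamma(u_n)(t), \quad g_n \in \partial\Phi(u_n),
\end{equation*}
exactly as in the proof of Theorem \ref{th12}. The subdifferentials $\{g_n\}_n$ are bounded in $L^{p'}(T,H)$ (since $\partial\varphi$ is bounded on bounded sets); pass to a subsequence $g_n \xrightarrow{w} g$, with $g \in \partial\Phi(\hat u)$ by maximal monotonicity of $\partial\Phi$ (after using the strong $C(T,H)$-convergence). The term $\epsilon_n\mathcal{F}(u_n) \to 0$ in $L^{p'}(T,X^*)$. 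Testing the equation with $u_n - \hat u$, integrating by parts via Proposition \ref{prop1}, and using the periodic boundary conditions together with $u_n \to \hat u$ in $L^p(T,H)$ yields $\limsup_{n\to\infty}((a(u_n),u_n-\hat u)) \leq 0$. By $L$-pseudomonotonicity of $a(\cdot)$, $a(u_n) \xrightarrow{w} a(\hat u)$ in $L^{p'}(T,X^*)$, and the limiting equation is $-\hat u' = a(\hat u) + g + \gamma(\hat u)$, $\hat u(0)=\hat u(b)$, exhibiting $\hat u \in W_p(T)$ as a solution of (\ref{eq38}).

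The main obstacle is the $L$-pseudomonotonicity passage in the last step: one must ensure that the cross-terms between weakly convergent sequences ($g_n$, $\gamma(u_n)$, $\epsilon_n\mathcal{F}(u_n)$) and $u_n - \hat u$ vanish in the limit, which relies on the strong $L^p(T,H)$-convergence from the compact embedding and on the periodic boundary condition to kill the boundary contributions from integration by parts. Once $\limsup ((a(u_n),u_n-\hat u)) \leq 0$ is verified, the identification of the limit is automatic.
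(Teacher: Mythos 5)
Your proposal is correct and follows essentially the same route as the paper: regularize with $\epsilon_n\mathcal{F}$, invoke Theorem \ref{th13} for the strongly monotone approximate problems, use the $\epsilon_n$-independent continuous extremal selection $\gamma$ and the compactness of $E_*$ to extract $u_n\xrightarrow{w}\hat u$ in $W_p(T)$, $u_n\rightarrow\hat u$ in $C(T,H)$, and pass to the limit via the maximal monotonicity of $\partial\Phi$ and the $L$-pseudomonotonicity of $a(\cdot)$. Your version is in fact slightly more careful than the paper's, since you explicitly verify the $\limsup$ condition needed for the $L$-pseudomonotonicity step and correctly write the fixed-point identity as $u_n=\xi_n(\gamma(u_n))$ with the solution map depending on $n$.
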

\begin{proof}
	Again we choose $\epsilon_n\rightarrow 0^+$ and consider the approximate problems
	$$\left\{\begin{array}{l}
		-u'(t)\in A(t,u(t))+\epsilon_n\mathcal{F}(u(t))+\partial\varphi(u(t))+{\rm ext}\, F(t,u(t))\ \mbox{for almost all}\ t\in T,\\
		u(0)=u(b).
	\end{array}\right\}$$
	
	This problem satisfies the strong monotonicity condition and so Theorem \ref{th13} can be applied to produce a solution $u_n\in W_p(T)$ for all $n\in\NN$. We have
	$$\{u_n\}_{n\geq 1}\subseteq E_*\ \mbox{and}\ u_n=(\xi\circ\gamma)(u_n)\ \mbox{for all}\ n\in\NN.$$
	
	Therefore $\{u_n\}_{n\geq 1}$ is bounded in $W_p(T)$ and relatively compact in $C(T,H)$. So, we may assume that
	$$u_n\stackrel{w}{\rightarrow}\hat{u}\ \mbox{in}\ W_p(T)\ \mbox{and}\ u_n\rightarrow\hat{u}\ \mbox{in}\ C(T,H)\ \mbox{as}\ n\rightarrow\infty.$$
	
	We have
	\begin{eqnarray*}
		&&\gamma(u_n)\rightarrow\gamma(\hat{u})\ \mbox{in}\ L^1_w(T,H),\\
		&\Rightarrow&\gamma(u_n)\stackrel{w}{\rightarrow}\gamma(\hat{u})\ \mbox{in}\ L^{p'}(T,H)\\
		&&(\mbox{see Hu \& Papageorgiou \cite[Lemma 2.8, p. 24]{5} and (\ref{eq39})}).
	\end{eqnarray*}
	
	We know that for every $n\in\NN$
	\begin{eqnarray*}
		&&-u'_n(t)\in A(t,u_n(t))+\epsilon_n\mathcal{F}(u_n(t))+g_n(t)+\gamma(u_n)(t)\ \mbox{for almost all}\ t\in T,\\
		&&u_n(0)=u_n(b)
	\end{eqnarray*}
	with $g_n\in\partial\Phi(u_n),\ n\in\NN$. Since $\partial\varphi(\cdot)$ maps bounded sets to bounded sets and it is maximal monotone, we have (at least for a subsequence)
	$$g_n\stackrel{w}{\rightarrow}g\ \mbox{in}\ L^{p'}(T,H)\ \mbox{and}\ g\in\partial\Phi(\hat{u}).$$
	
	Passing to the limit as $n\rightarrow\infty$ in the evolution equation and using the $L$-pseudo-monotonicity of $a(\cdot)$, as before, we obtain
	\begin{eqnarray*}
		&&-\hat{u}'(t)\in A(t,\hat{u}(t))+g(t)+\gamma(\hat{u})(t)\ \mbox{for almost all}\ t\in T,\\
		&&u(0)=u(b),\\
		&\Rightarrow&\hat{u}\in W_p(T)\ \mbox{is a solution of (\ref{eq38})}.
	\end{eqnarray*}
This completes the proof of Theorem \ref{th14}.
\end{proof}

\section{Strong relaxation}

In this section we show that every solution of the convex problem can be approximated in the $C(T,H)$-norm topology by certain extremal trajectories. Such a result is known as ``strong relaxation" and is important for many applications. For example, in control theory it is related to the so-called ``bang-bang principle". In this context the result says that any state of the control system can be approximated by states which are generated by bang-bang controls. So, in the operation of the system, we can economize in the use of control functions.

To prove such an approximation result, we need to strengthen the conditions on the multivalued perturbation $F(t,x)$. So, the hypotheses are the following:

\smallskip
$H(F)_4:$ $F:T\times X\rightarrow P_{wkc}(H)$ is a multifunction such that
\begin{itemize}
	\item[(i)] for all $x\in H,\ t\mapsto F(t,x)$ is graph measurable;
	\item[(ii)] $h(F(t,x),F(t,y))\leq l(t)|x-y|$ for almost all $t\in T$, and all $x,y\in H$, with $l\in L^1(T)$;
	\item[(iii)] there exists $M>0$ such that
	\begin{eqnarray*}
		&&0\leq(h,x)\ \mbox{for almost all}\ t\in T,\ \mbox{and all}\ |x|=M,\ h\in F(t,x),\\
		&&|F(t,x)|\leq a_M(t)\ \mbox{for almost all}\ t\in T,\ \mbox{and all}\ |x|\leq M,\ \mbox{with}\ a_M\in L^{p'}(T).
	\end{eqnarray*}
\end{itemize}

Alternatively, we can impose the following conditions on $F(t,x)$.

$H(F)'_1:$ $F:T\times H\rightarrow P_{wkc}(H)$ is a multifunction such that hypotheses $H(F)'_4(i),(ii)$ are the same as the corresponding hypotheses $H(F)_4(i),(ii)$ and
\begin{itemize}
	\item[(iii)] $|F(t,x)|\leq k(t)[1+|x|]$ for almost all $t\in T$, and all $x\in H$, with $k\in L^{p'}(T)$.
\end{itemize}

In what follows, we denote by $\hat{S}_c\subseteq W_p(T)$ the solution set of problem (\ref{eq1}) with the multivalued perturbation $F(t,x)$ being convex-valued. Suppose $u\in \hat{S}_c$. Then by $\hat{S}_e(u(0))$ we denote the solution set of the following Cauchy problem
$$-v'(t)\in A(t,v(t))+\partial\varphi(v(t))+{\rm ext}\, F(t,v(t))\ \mbox{for almost all}\ t\in T,\ v(0)=u(0).$$

Reasoning as in the proof of Theorem \ref{th13}, we show that $\hat{S}_e(u(0))\subseteq W_p(T)$ is nonempty.
\begin{theorem}\label{th15}
	If hypotheses $H(A)_1,\ H(\varphi)_1$ and $H(F)_4,\ H_0$ or $H(F)'_4$ hold and $u\in \hat{S}_c$, then we can find $\{u_n\}_{n\geq 1}\subseteq \hat{S}_e(u(0))$ such that $u_n\rightarrow u$ in $C(T,H)$.
\end{theorem}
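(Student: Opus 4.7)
The plan is to adapt the Tolstonogov strong relaxation scheme to our evolution-triple setting. Fix $u\in\hat{S}_c$ and a selector $h\in S^{p'}_{F(\cdot,u(\cdot))}$ realising the inclusion $-u'\in A(\cdot,u)+\partial\varphi(u)+h$. By the same truncation used in Proposition \ref{prop6} we may assume $|F(t,x)|\leq\hat\eta(t)$ for almost all $t\in T$ and all $x\in H$, with $\hat\eta\in L^{p'}(T)$, so that the compact set $E_*=\overline{\rm conv}\,\xi(W)\subseteq C(T,H)$ from the proof of Theorem \ref{th13} is available.

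First, I would build a continuous ``nearest point'' selector $\tilde h:E_*\to L^{p'}(T,H)$ by $\tilde h(v)(t)=p_{F(t,v(t))}(h(t))$, the unique projection of $h(t)$ onto $F(t,v(t))\in P_{wkc}(H)$. Since hypothesis $H(F)_4(ii)$ gives $h(F(t,v(t)),F(t,u(t)))\leq l(t)|v(t)-u(t)|$, a classical estimate yields the pointwise bound
\[
|\tilde h(v)(t)-h(t)|\leq 2\,l(t)\,|v(t)-u(t)|,
\]
and $\tilde h$ is continuous from $E_*$ into $L^{p'}(T,H)$. Next, invoking the continuous extremal selection theorem for Hausdorff--Lipschitz multifunctions (a refinement of Theorem 8.31 of Hu \& Papageorgiou \cite{4} used already in Theorem \ref{th13}), for each $n\in\NN$ I obtain a continuous map $\gamma_n:E_*\to L^1_w(T,H)$ with $\gamma_n(v)\in S^{p'}_{{\rm ext}\,F(\cdot,v(\cdot))}$ and $\|\gamma_n(v)-\tilde h(v)\|_w\leq 1/n$ uniformly in $v\in E_*$. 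Letting $\xi_0:L^{p'}(T,H)\to C(T,H)$ denote the Cauchy solution map with initial value $u(0)$ (completely continuous by the argument of Proposition \ref{prop7} applied with the initial condition in place of the periodic one), the Schauder fixed point theorem applied to $\xi_0\circ\gamma_n:E_*\to E_*$ delivers $u_n\in\hat S_e(u(0))$ with $u_n=\xi_0(\gamma_n(u_n))$.

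It then remains to show $u_n\to u$ in $C(T,H)$. Setting $r_n(t)=|u_n(t)-u(t)|^2$, I subtract the equations for $u_n$ and $u$, pair with $u_n(t)-u(t)\in X$, integrate, and use Proposition \ref{prop1} together with the monotonicity of $A(t,\cdot)$ and of $\partial\varphi$ to arrive at
\[
r_n(t)\leq 2\int_0^t\bigl(\gamma_n(u_n)(s)-h(s),\,u_n(s)-u(s)\bigr)\,ds.
\]
Split the integrand as $(\gamma_n(u_n)-\tilde h(u_n))+(\tilde h(u_n)-h)$. The second piece is controlled by $4\int_0^t l(s)\,r_n(s)\,ds$ via Step~1. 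For the first piece, set $\Psi_n(t)=\int_0^t(\gamma_n(u_n)-\tilde h(u_n))(s)\,ds$, so $\|\Psi_n\|_{C(T,H)}\leq 2/n$ by the weak-norm bound. An integration by parts in $s$ transfers the derivative onto $u_n-u$; using the uniform $W_p(T)$-bound on $\{u_n-u\}$ (coming from $\{u_n\}\subseteq E_*$ and hypothesis $H(A)_1(iii)$), the resulting term is $o(1)$ uniformly in $t\in T$. Gronwall's inequality then forces $\sup_{t\in T}r_n(t)\to 0$, yielding $u_n\to u$ in $C(T,H)$.

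The principal obstacle is precisely the first piece in the final step: the extremal selection $\gamma_n$ only approximates $\tilde h$ in the weak norm $\|\cdot\|_w$ on $L^1(T,H)$, not in any strong Bochner norm, so one cannot pair it directly with $u_n-u$. The resolution is the integration-by-parts device above---essentially the same trick used implicitly in deriving estimate (\ref{eq32})---which exchanges the weak-norm smallness of $\Psi_n$ against the uniform $W_p(T)$ regularity of $u_n-u$. The construction of $\tilde h$ via metric projection is what makes the second piece amenable to Gronwall, since without it one has no pointwise comparison between an arbitrary selector of $F(\cdot,u_n(\cdot))$ and $h$.
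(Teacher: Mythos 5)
Your overall architecture is the one the paper uses: truncate $F$ via Proposition \ref{prop6}, work on $E_*=\overline{\rm conv}\,\xi(W)$, build a continuous ``nearest--point'' type selector of $S^{p'}_{F(\cdot,v(\cdot))}$ anchored at $h$, push it to an extremal selector $\gamma_n$ with $\|\gamma_n(v)-\tilde h(v)\|_w\leq 1/n$ via Theorem 8.31 of Hu--Papageorgiou, solve the Cauchy problem by a fixed point, and close with Gronwall. Your one genuine departure in the construction --- using the exact metric projection $\tilde h(v)(t)=p_{F(t,v(t))}(h(t))$ instead of the paper's approximate--projection multifunction $\Gamma_{v,\epsilon}$ followed by the Bressan--Colombo selection theorem --- is legitimate and arguably cleaner (continuity of the projection under $h$-convergence of convex sets in a Hilbert space is standard, and since $h(t)\in F(t,u(t))$ you even get the sharper bound $|\tilde h(v)(t)-h(t)|=d(h(t),F(t,v(t)))\leq l(t)\,|v(t)-u(t)|$); it buys you a pointwise Lipschitz comparison without the extra $\epsilon/(2Mb)$ bookkeeping.

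The gap is in your treatment of the first piece $\int_0^t(\gamma_n(u_n)(s)-\tilde h(u_n)(s),u_n(s)-u(s))\,ds$. Your integration by parts requires the identity $\frac{d}{ds}(\Psi_n(s),w_n(s))=(\psi_n(s),w_n(s))+\langle w_n'(s),\Psi_n(s)\rangle$ with $w_n=u_n-u$, but $\Psi_n(s)=\int_0^s\psi_n(\tau)\,d\tau$ takes values only in $H$, not in $X$, while $w_n'(s)\in X^*$ (it contains $A(s,u_n(s))-A(s,u(s))$). The bracket $\langle w_n'(s),\Psi_n(s)\rangle$ is therefore undefined, $\Psi_n\notin W_p(T)$, and Proposition \ref{prop1} does not apply; nor can you bound the boundary term against $\|w_n'\|_{L^{p'}(T,X^*)}$ since $\|\Psi_n\|_{L^p(T,X)}$ is not finite in general. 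So the ``uniform $o(1)$'' claim is unsupported. The correct resolution is the paper's: use the relative compactness of $\{u_n\}_{n\geq1}$ in $C(T,H)$ to pass to a subsequence $u_n\to\hat u$ in $C(T,H)$, upgrade $\|\gamma_n(u_n)-\tilde h(u_n)\|_w\to0$ together with the uniform bound by $2\hat\eta$ to weak convergence to $0$ in $L^{p'}(T,H)$ (Lemma 2.8 in \cite{5}), and kill the troublesome integral for each fixed $t$ by pairing a weakly null sequence against the strongly convergent $u_n-u$; Gronwall then gives $\hat u=u$, and the subsequence principle yields convergence of the whole sequence. A secondary point you should also repair: Schauder for $\xi_0\circ\gamma_n$ requires $\xi_0(W)\subseteq E_*$, which does not follow from $E_*=\overline{\rm conv}\,\xi(W)$ with $\xi$ the \emph{periodic} solution map; you must enlarge $E_*$ to contain the Cauchy orbits (the paper glosses over the same issue).
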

\begin{proof}
	As before, as a result of the {\it a priori} bounds established in Proposition \ref{prop6}, we may assume without any loss of generality that
	\begin{equation}\label{eq41}
		|F(t,x)|\leq\hat{\eta}(t)\ \mbox{for almost all}\ t\in T,\ \mbox{and all}\ x\in H,\ \mbox{with}\ \hat{\eta}\in L^{p'}(T).
	\end{equation}
	
	Since $u\in\hat{S}_c$, there exists $h\in S^{p'}_{F(\cdot,u(\cdot))}$ such that
	$$\left\{\begin{array}{l}
		-u'(t)\in A(t,u(t))+\partial\varphi(u(t))+h(t)\ \mbox{for almost all}\ t\in T,\\
		u(0)=u(b).
	\end{array}\right\}$$
	
	Again, we introduce the following two sets
	\begin{eqnarray*}
		&&W=\{h\in L^{p'}(T,H):|h(t)|\leq\hat{\eta}(t)\ \mbox{for almost all}\ t\in T\},\\
		&&E_*=\overline{\rm conv}\,\xi(W)\in P_{kc}(C(T,H)).
	\end{eqnarray*}
	
	Given $v\in E_*$ and $\epsilon>0$, we consider the multifunction $\Gamma_{v,\epsilon}:T\rightarrow 2^H\backslash\{\emptyset\}$ defined by
	$$\Gamma_{v,\epsilon}(t)=\left\{y\in F(t,v(t)):|h(t)-y|<\frac{\epsilon}{2Mb}+d(h(t),F(t,v(t)))\right\}.$$
	
	Here, $M>0$ is the {\it a priori} bound from Proposition \ref{prop6}. Hypotheses $H(F)_4(i),(ii)$ imply that the mapping $t\mapsto F(t,v(t))$ is measurable. It follows that $t\mapsto \Gamma_{v,\epsilon}(t)$ is graph measurable and by invoking the Yankov-von Neumann-Aumann selection theorem, we can find a measurable function $\hat{h}_{v,\epsilon}:T\rightarrow H$ such that
	\begin{eqnarray}\label{eq42}
		&&\hat{h}_{v,\epsilon}(t)\in\Gamma_{v,\epsilon}(t)\ \mbox{for almost all}\ t\in T,\nonumber\\
		&\Rightarrow&\hat{h}_{v,\epsilon}\in L^{p'}(T,H)\ (\mbox{see (\ref{eq41})}).
	\end{eqnarray}
	
	Therefore, if we introduce the multifunction $\hat{H}_{\epsilon}:E_*\rightarrow 2^{L^{p'}(T,H)}$ defined by
	$$\hat{H}_{\epsilon}(v)=S^{p'}_{\Gamma_{v,\epsilon}},$$
	then from (\ref{eq42}) we see that $\hat{H}_{\epsilon}(v)\neq\emptyset$ for all $v\in E_*$. In addition, Lemma 8.3 of Hu \& Papageorgiou \cite[p. 239]{4} implies that
	\begin{eqnarray*}
		&&v\rightarrow\hat{H}_{\epsilon}(v)\ \mbox{is lsc},\\
		&\Rightarrow&v\rightarrow\overline{\hat{H}_{\epsilon}}^{|\cdot|}(v)\ \mbox{is lsc}\\
		&&(\mbox{see Hu \& Papageorgiou \cite[Proposition 2.38, p. 50]{4}}).
	\end{eqnarray*}
	
	Moreover, $v\mapsto \overline{\hat{H}_{\epsilon}}^{|\cdot|}(v)$ has decomposable values. So, using the Bressan-Colombo \cite{1} selection theorem, we produce a continuous map $\gamma_{\epsilon}:E_*\rightarrow L^{p'}(T,H)$ such that
	$$\gamma_{\epsilon}(v)\in \overline{\hat{H}_{\epsilon}}^{|\cdot|}(v)\ \mbox{for all}\ v\in E_*.$$
	
	Then Theorem 8.31 of Hu \& Papageorgiou \cite[p. 260]{4} gives a continuous map $\beta_{\epsilon}:E_*\rightarrow L^1_w(T,H)$ such that
	\begin{equation}\label{eq43}
		\beta_{\epsilon}(v)\in {\rm ext}\, S^{p'}_{F(\cdot,v(\cdot))}=S^{p'}_{{\rm ext}\, F(\cdot,v(\cdot))}\ \mbox{and}\ ||\beta_{\epsilon}(v)-\gamma_{\epsilon}(v)||_w<\epsilon\ \mbox{for all}\ v\in E_*.
	\end{equation}
	
	Now let $\epsilon_n=\frac{1}{n},\ \gamma_n=\gamma_{\epsilon_n},\ \beta_n=\beta_{\epsilon_n}$ for all $n\in\NN$ and $u_0=u(0)=u(b)$. We cosider the following Cauchy problem
	\begin{equation}\label{eq44}
		-u'(t)\in A(t,u(t))+\partial\varphi(u(t))+\beta_n(u(t))\ \mbox{for almost all}\ t\in T,\ u(0)=u_0.
	\end{equation}
	
	Let $u_n\in W_p(T)$ be a solution of (\ref{eq43}). It is clear that $\{u_n\}_{n\geq 1}\subseteq\hat{S}_e(u_0)$. Therefore $\{u_n\}_{n\geq 1}\subseteq C(T,H)$ is relatively compact. Also, directly from (\ref{eq44}) we see that $\{u_n\}_{n\geq 1}\subseteq W_p(T)$ is bounded. So, we may assume that
	\begin{equation}\label{eq45}
		u_n\stackrel{w}{\rightarrow}\hat{u}\ \mbox{in}\ W_p(T)\ \mbox{and}\ u_n\rightarrow\hat{u}\ \mbox{in}\ C(T,H)\ \mbox{as}\ n\rightarrow\infty .
	\end{equation}
	
	From (\ref{eq32}) (with $\epsilon_m=0$) and since $u_n(0)=u_0=u(0)$ for all $n\in\NN$, we have
	\begin{eqnarray}\label{eq46}
		|u_n(t)-u(t)|^2&\leq&\int^t_0(\beta_n(u_n)(s)-h(s),u_n(s)-u(s))ds\nonumber\\
		&\leq& \int^t_0(\beta_n(u_n)(s)-\gamma_n(u_n)(s),u_n(s)-u(s))ds\nonumber\\
		&+&\int^t_0|\gamma_n(u_n)(s)-h(s)|\cdot|u_n(s)-u(s)|ds\nonumber\\
		&\leq&\int^t_0(\beta_n(u_n)(s)-\gamma_n(u_n)(s),u_n(s)-u(s))ds\nonumber\\
		&+&\int^t_0\left[\frac{1}{2Mbn}+d(h(s),F(s,u_n(s)))\right]|u_n(s)-u(s)|ds\nonumber\\
		&\leq&\int^t_0(\beta_n(u_n)(s)-\gamma_n(u_n)(s),u_n(s)-u(s))ds\nonumber\\
		&+&\frac{1}{n}+\int^t_0h(F(s,u(s)),F(s,u_n(s)))|u_n(s)-u(s)|ds\nonumber\\
		&\leq&\int^t_0(\beta_n(u_n)(s)-\gamma_n(u_n)(s),u_n(s)-u(s))ds+\epsilon+\nonumber\\
		&&\hspace{3cm}\int^t_0 l(s)|u_n(s)-u(s)|^2ds.
	\end{eqnarray}
	
	By (\ref{eq43}) and Lemma 2.8 of Hu \& Papageorgiou \cite[p. 24]{5}, we have
	\begin{eqnarray}\label{eq47}
		&&\beta_n(u_n)-\gamma_n(u_n)\stackrel{w}{\rightarrow}0\ \mbox{in}\ L^{p'}(T,H)\ \mbox{as}\ n\rightarrow\infty,\nonumber\\
		&\Rightarrow&\int^b_0(\beta_n(u_n)(s)-\gamma_n(u_n)(s),u_n(s)-u(s))ds\rightarrow 0\ \mbox{as}\ n\rightarrow\infty\ (\mbox{see (\ref{eq45})}).
	\end{eqnarray}
	
	Therefore, if in (\ref{eq46}) we pass to the limit as $n\rightarrow\infty$ and use (\ref{eq45}) and (\ref{eq47}), then
	\begin{eqnarray*}
		&&|\hat{u}(t)-u(t)|^2\leq \int^t_0l(s)|\hat{u}(s)-u(s)|^2ds,\\
		&\Rightarrow& \hat{u}=u\ (\mbox{by Gronwall's inequality}).
	\end{eqnarray*}

	Hence $u=\lim_{n\rightarrow\infty} u_n$ in $C(T,H)$ with $u_n\in \hat{S}_{e}(u(0))$ for all $n\in\NN$.
\end{proof}

\section{Examples}

In this section we illustrate the previous results, by considering parabolic distributed parameter control systems.

Let $T=[0,b]$ and assume that $\Omega\subseteq \RR^N$ is a bounded domain with Lipschitz boundary $\partial\Omega$. We consider the following nonlinear control system
\begin{eqnarray}\label{eq48}
	\left\{\begin{array}{l}
		\frac{\partial u}{\partial t}-\Delta_pu+\beta(u)\ni f_0(t,z,u)+(k(t,z),v(t,z))_{\RR^N}\ \mbox{in}\ (0,b)\times\Omega,\\
		u(t,\cdot)|_{\partial\Omega}=0\ \mbox{for all}\ t\in(0,b),\ u(0,\cdot)=u(b,\cdot)\ \mbox{in}\ \Omega,\\
		v(t,z)\in K(t,z)\ \mbox{for almost all}\ (t,z)\in T\times\Omega.
	\end{array}\right\}
\end{eqnarray}

In this problem, $\Delta_p\ (2\leq p<\infty)$ denotes the $p$-Laplacian differential operator defined by
$$\Delta_pu={\rm div}\,(|Du|^{p-2}Du)\ \mbox{for all}\ u\in W^{1,p}(\Omega).$$

The nonlinearity $f_0:T\times\Omega\times\RR\rightarrow\RR$ is a Carath\'eodory function, that is,
\begin{itemize}
	\item for all $x\in\RR$, the mapping $(t,z)\mapsto f_0(t,z,x)$ is measurable;
	\item for almost all $(t,z)\in T\times\Omega$, $x\mapsto f_0(t,z,x)$ is continuous;
	\item $|f_0(t,z,x)|\leq k_0(t,z)(1+|x|)$ for almost all $(t,z)\in T\times\Omega$, and all $x\in\RR$, with $k_0\in L^2(T\times \Omega)$.
\end{itemize}

Also, $\beta:\RR\rightarrow 2^{\RR}$ is a maximal monotone map. Then
$$\beta=\partial j\ \mbox{with}\ j\in\Gamma_0(\RR)$$
(see Corollary 3.2.40 of Papageorgiou \& Kyritsi Yiallourou \cite[p. 176]{7}). We set
$$\varphi(u)=\left\{\begin{array}{ll}
	\int_{\Omega}j(u(z))dz&\mbox{if}\ j(u(\cdot))\in L^1(\Omega)\\
	+\infty&\mbox{otherwise}
\end{array}\right.\ \mbox{for all}\ u\in L^2(\Omega).$$

We know that $\varphi\in\Gamma(L^2(\Omega))$ and
$$y\in\partial\varphi(u)\ \mbox{if and only if}\ y(z)\in\partial j(u(z))=\beta(u(z))\ \mbox{for almost all}\ z\in\Omega$$
(see Hu \& Papageorgiou \cite{4}). We assume that if $B\subseteq L^2(\Omega)$ is bounded, then $\int_\Omega j(u(z))dz\leq M$ for all $u\in B$, some $M>0$,  and for every $u\in L^p(T,W^{1,p}_{0}(\Omega))$, the multifunction $(t,z)\mapsto\beta(u(t,z))$ has a selection in the space $L^{p'}(T,L^2(\Omega))$. Since $p\geq 2$, this is satisfied if $(t,z)\mapsto\beta(u(t,z))$ admits a selection in $L^2(T\times\Omega)$.

The function $v\in L^2(T\times\Omega),\ v:T\times\Omega\rightarrow\RR^m$ is the control function and $K(t,z)\subseteq\RR^m$ is the control constant set. We assume that the multifunction $K:T\times\Omega\rightarrow P_{kc}(\RR^m)$ is graph measurable and $|K(t,z)|\leq\hat{M}$ for some $\hat{M}>0$ and for almost all $(t,z)\in T\times\Omega$.

We formulate problem (\ref{eq48}) in the form of an abstract evolution inclusion as (\ref{eq1}). The evolution triple consists of the following spaces
$$X=W^{1,p}_{0}(\Omega),\ H=L^2(\Omega),\ X^*=W^{-1,p'}(\Omega)\ \left(\frac{1}{p}+\frac{1}{p'}=1\right).$$

The Sobolev embedding theorem implies that $X\hookrightarrow H$ compactly. Let $A:X\rightarrow X^*$ be the nonlinear map defined by
$$\left\langle A(u),h\right\rangle=\int_{\Omega}|Du|^{p-2}(Du,Dh)_{\RR^N}dz.$$

Evidently, $A(\cdot)$ is continuous, strictly monotone, hence maximal monotone, too. Also, we have
\begin{eqnarray*}
	&&\left\langle A(u),u\right\rangle=||Du||^p_p=||u||^p\ (\mbox{by the Poincar\'e inequality})\\
	\mbox{and}&&\left\langle A(u),h\right\rangle\leq||u||^{p-1}||h||_p\ \mbox{for all}\ h\in W^{1,p}_{0}(\Omega)\ (\mbox{by H\"older's inequality}).
\end{eqnarray*}

So, hypotheses $H(A)_1$ are satisfied.

Let $f:T\times H\rightarrow H$ be the Nemitsky map corresponding to the function $f_0$, that is,
$$f(t,u)(\cdot)=f_0(t,\cdot,u(\cdot))\ \mbox{for all}\ u\in H=L^2(\Omega).$$

We introduce the multifunction $G:T\rightarrow P_{wkc}(L^2(\Omega))$ defined by
$$G(t)=\{(k(t,\cdot),v(\cdot))_{\RR^m}:v\in S^2_{K(t,\cdot)}\}.$$

Then $G(\cdot)$ is measurable and $|G(\cdot)|\in L^2(T)$. We set
$$F(t,u)=f(t,u)+G(t)\ \mbox{for all}\ (t,u)\in T\times L^2(\Omega).$$

It follows that this multifunction satisfies hypotheses $H(F)'_1$. Then problem (\ref{eq48}) is equivalent to the following nonlinear evolution inclusion
$$\left\{\begin{array}{l}
	-u'(t)\in A(u(t))+\partial\varphi(u(t))+F(t,u(t))\ \mbox{for almost all}\ t\in T,\\
	u(0)=u(b).
\end{array}\right\}$$

By Theorem \ref{th9}, this problem has a solution $u\in L^p(T,W^{1,p}_{0}(\Omega))$ such that
$$\frac{\partial u}{\partial t}\in L ^{p'}(T,W^{-1,p'}(\Omega)).$$

In fact, the set of solutions is compact in $C(T,L^2(\Omega))$ (see Theorem \ref{th10}). Moreover, if we assume that
$$|f_0(t,z,x)-f_0(t,z,y)|\leq l_0(t,z)|x-y|\ \mbox{for almost all}\ (t,z)\in T\times\Omega,\ \mbox{and all}\ x,y\in\RR,$$
then by the strong relaxation theorem (see Theorem \ref{th15}), given any solution $u$ of the convex problem, we can find a solution $\hat{u}$ which is generated by a bang-bang control $v(t,z)\in {\rm ext}\, V(t,z)$ for almost all $(t,z)\in T\times\Omega$ such that
$$\sup\limits_{t\in T}|u(t,\cdot)-\hat{u}(t,\cdot)|_{L^2(\Omega)}<\epsilon,\ \epsilon>0$$

In a similar way, we can also deal with the following control system
$$\left\{\begin{array}{l}
	\frac{\partial u}{\partial t}-\Delta_pu-\Delta u=f_0(t,z,u)+(k(t,z),v(t,z))_{\RR^m}\ \mbox{in}\ (0,b)\times\Omega,\\
	0\in\beta(u(t,z))\ \mbox{for all}\ (t,z)\in T\times\partial\Omega,\\
	u(0,z)=u(b,z)\ \mbox{for almost all}\ z\in\Omega,v(t,z)\in K(t,z)\ \mbox{almost everywhere in}\ T\times\Omega.
\end{array}\right\}$$

Note that in this case hypothesis $H(A)$ is satisfied.

\medskip
{\bf Acknowledgments.} This research was supported by the Slovenian Research Agency grants P1-0292, J1-8131, J1-7025, and N1-0064. V.D. R\u adulescu acknowledges the support through a grant of the Romanian Ministry of Research and
Innovation, CNCS - UEFISCDI, project number PN-III-P4-ID-PCE-2016-0130, within
PNCDI III.

\end{document}